\def\colorful{0}
\newif\ifhyper\IfFileExists{hyperref.sty}{\hypertrue}{\hyperfalse}
\ifhyper\usepackage{hyperref}\fi
\def\nnewcolor{1}
\newcommand{\new}[1]{{\color{red} #1}}
\newcommand{\new}[1]{{#1}}
\newcommand{\tr}{\mathrm{tr}}
\newtheorem{theorem}{Theorem}[section]
\newtheorem*{conj}{Conjecture}
\newtheorem{lemma}[theorem]{Lemma}
\newtheorem{informal theorem}[theorem]{Theorem (informal statement)}
\newtheorem{proposition}[theorem]{Proposition}
\newtheorem{claim}[theorem]{Claim}
\newtheorem{fact}[theorem]{Fact}
\theoremstyle{definition}
\newcommand{\R}{\mathbb{R}}
\newcommand{\E}{\mathbf{E}}
\newcommand{\eps}{\epsilon}
\renewcommand{\Pr}{\mathbf{Pr}}
\newcommand{\polylog}{\mathrm{polylog}}
\newcommand{\var}{\mathbf{Var}}
\newcommand{\calN}{{\cal N}}
\newcommand\blfootnotea[1]{%
  \begingroup
  \renewcommand\thefootnote{}\footnote{#1}%
  \endgroup
}
\title{A Nearly Tight Bound for Fitting an Ellipsoid \\
to Gaussian Random Points\blfootnotea{Author last names are in randomized order.}}
\author{
Daniel M. Kane\thanks{Supported by NSF Medium Award CCF-2107547, NSF Award CCF-1553288 (CAREER), and a grant from CasperLabs.}\\
University of California, San Diego\\
{\tt dakane@cs.ucsd.edu}
\and
Ilias Diakonikolas\thanks{Supported by NSF Medium Award CCF-2107079, NSF Award CCF-1652862 (CAREER), a Sloan Research Fellowship, and a DARPA Learning with Less Labels (LwLL) grant.}\\
University of Wisconsin-Madison\\
{\tt ilias@cs.wisc.edu}\\
}
\begin{document}

\maketitle

\begin{abstract}
We prove that for $c>0$ a sufficiently small universal constant that 
a random set of $c d^2/\log^4(d)$ independent Gaussian random points in $\R^d$ 
lie on a common ellipsoid with high probability. 
This nearly establishes a conjecture of~\cite{SaundersonCPW12}, within logarithmic factors.
The latter conjecture has attracted significant attention over the past decade, due
to its connections to machine learning and sum-of-squares lower bounds for certain statistical problems.
\end{abstract}

\section{Introduction} \label{sec:intro}

\subsection{Background} \label{ssec:background}

In this paper, we study the following question:
Given a multiset of $m$ i.i.d.\ samples from the standard multivariate Gaussian on $\R^d$,
does there exist an origin-centered ellipsoid that passes through all these points?
That is, we ask whether a set of $m$ {\em random points} has the so called {\em ellipsoid fitting property}.
Specifically, we are interested in understanding the largest value of $m$ such
that the ellipsoid fitting property holds with high probability. Throughout this paper,
the term ``with high probability'' will be used to mean $1-o_d(1)$, where $o_d(1)$ goes to $0$
when $d \rightarrow \infty$.

Fitting an ellipsoid to random points is a natural probabilistic question that is interesting on its own right.
The question, first studied in~\cite{SaundersonPhD, SaundersonCPW12}, has attracted significant
attention over the past years, in part due to its connections to theoretical computer science
and machine learning. The early papers~\cite{SaundersonPhD, SaundersonCPW12, SaundersonPW13}
formulated a plausible conjecture on the optimal value of the parameter $m$,
and made the first progress towards proving this conjecture. In more detail, these works conjectured
a phase transition at $m \approx d^2/4$. Formally, the conjecture posits the following:

\begin{conj}[SCPW conjecture] \label{conj}
For any universal constant $c>0$, (i) for $m \leq (1-c) d^2/4$ there exists an origin-centered fitting
ellipsoid with high probability, and (ii) for $m \geq (1+c) d^2/4$, there does not exist
an origin-centered fitting ellipsoid with high probability.
\end{conj}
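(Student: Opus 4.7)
I will plan direction (i) in detail---it is the constructive, harder half that the paper's title and abstract claim to nearly prove---and sketch (ii) at the end. For (i) my candidate ellipsoid is
\[
M \;=\; \alpha I + \sum_{i=1}^m \lambda_i\, x_i x_i^T, \qquad \alpha = 1/d,
\]
with the scalars $\lambda = (\lambda_1,\dots,\lambda_m)$ chosen so that $x_j^T M x_j = 1$ for every $j$. The choice $\alpha = 1/d$ is forced by $\E[x_j^T(I/d)x_j]=1$. The fitting condition becomes the linear system $A \lambda = b$, where $A_{ij} = \langle x_i, x_j\rangle^2$ and $b_j = 1 - \|x_j\|_2^2/d$. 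Once this system is solved, the only remaining task is to show that the correction $E := \sum_i \lambda_i x_i x_i^T$ has spectral norm strictly less than $\alpha = 1/d$ with high probability; this forces $M \succ 0$ and so yields a genuine fitting ellipsoid.

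\textbf{Step 1: conditioning of $A$.} A direct moment computation yields $\E[A] = (d^2+d)\, I_m + d\, J_m$ (with $J_m$ the $m\times m$ all-ones matrix), whose smallest eigenvalue is $\Theta(d^2)$. I would first show $\|A - \E[A]\|_{\mathrm{op}} = \tilde{O}(d\sqrt{m})$ with high probability: the entries of $A - \E[A]$ are degree-$4$ Gaussian polynomials, so a Hanson--Wright-type bound combined with an $\epsilon$-net over $\mathbb{S}^{m-1}$ should suffice. In the regime $m \le c\, d^2/\log^4 d$ this gives $\sigma_{\min}(A) \gtrsim d^2$, which together with the easy estimate $\|b\|_2 = O(\sqrt{m/d})$ yields $\|\lambda\|_2 \lesssim \sqrt{m}/d^{5/2}$ and, typically, a per-coordinate scale $|\lambda_i| \lesssim 1/d^{5/2}$ up to logarithmic factors.

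\textbf{Step 2 (the main obstacle): bounding $\|E\|_{\mathrm{op}}$ in the presence of dependence.} If the $\lambda_i$ were \emph{independent} of the $x_i$, a matrix Bernstein bound would give
\[
\|E\|_{\mathrm{op}} \;\lesssim\; \sqrt{(\log d)\,\textstyle\sum_i \lambda_i^2 \|x_i\|_2^2} \;\approx\; \frac{\sqrt{m \log d}}{d^2},
\]
which is $o(1/d)$ exactly in the regime $m \le c\, d^2/\log^4 d$---precisely what is needed. The genuine difficulty is that $\lambda = A^{-1}b$ is a nonlinear function of \emph{all} the $x_j$, so this shortcut is unavailable. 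My plan is a resolvent/Neumann expansion: writing $A = \E[A] + \Delta$ and $A^{-1} = (\E[A])^{-1}\sum_{k \ge 0} \bigl(-\Delta (\E[A])^{-1}\bigr)^k$, decompose $E$ accordingly as $\sum_{k \ge 0} E_k$. The $k=0$ term is an explicit degree-$2$ Gaussian matrix (its mean is a multiple of $I$ and its fluctuations can be handled by ordinary matrix Bernstein). For $k \ge 1$ each $E_k$ is a Gaussian polynomial chaos of degree $2k+2$, whose spectral norm I would bound by combining the tail estimate on $\Delta$ from Step 1 with Gaussian hypercontractivity applied entrywise and then promoted to an operator-norm bound via trace-moment / net arguments. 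I expect each successive Neumann order to cost a factor of roughly $\sqrt{m\log d}/d$, so the series converges geometrically and its sum stays $o(1/d)$ precisely when $m \le c\, d^2/\log^4 d$; this is also where I expect the $\log^4 d$ loss to originate---one logarithmic factor per moment bound, times a constant number of Neumann terms needed before the remainder is negligible.

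\textbf{Direction (ii).} For the impossibility half I would follow the first/second-moment strategy pioneered by Saunderson--Parrilo--Willsky: the random affine slice $\{M : x_j^T M x_j = 1\ \forall j\}$ has codimension $m$ in the $\binom{d+1}{2}$-dimensional space of symmetric matrices, and a careful computation of the expected (suitably weighted) count of PSD matrices lying in that slice shows that this expectation collapses past $m = (1+c)\, d^2/4$---the factor-of-two gap from $\binom{d+1}{2}$ being attributable to the ``half-dimensional'' nature of boundary PSD matrices. A companion second-moment bound would then upgrade this to a with-high-probability non-existence statement in the regime $m \ge (1+c)\, d^2/4$.
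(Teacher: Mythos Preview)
First a framing remark: this statement is a \emph{conjecture} that the paper does not prove; the paper only establishes the weaker bound $m\le c\,d^2/\log^4 d$ in direction~(i) (Theorem~\ref{thm:main}) and says nothing new about~(ii). Since your plan in fact targets that same regime, I will compare it to the paper's proof of Theorem~\ref{thm:main}. Your overall architecture---identity perturbation, solve a linear system for the scalars, then show the correction has small spectral norm---matches the paper, but two of your technical steps have genuine gaps that the paper resolves by different means.

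\textbf{Step 1.} An $\eps$-net over $\mathbb{S}^{m-1}$ has cardinality $\exp(\Theta(m))=\exp(\Theta(d^2/\polylog d))$, so a union bound requires $\Pr\bigl(|u^\top(A-\E A)u|>t\bigr)\le\exp(-cm)$ for each fixed unit $u$. But $u^\top(A-\E A)u$ is a degree-$4$ polynomial in the $md$ underlying Gaussians, and Hanson--Wright/hypercontractivity give tails of order $\exp(-c(t/\sigma)^{1/2})$, not sub-Gaussian; you simply cannot reach failure probability $\exp(-cm)$ at any useful deviation level. The paper avoids the $\mathbb{S}^{m-1}$ net entirely and instead bounds $\E[\tr((A')^t)]$ for $t\approx\log d$ by a combinatorial count of valid pairings (Lemma~\ref{lem:A-prime}, Claim~\ref{clm:ind}); this trace-moment argument is explicitly the most technical step and is where the $\log^4 d$ loss actually enters.

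\textbf{Step 2.} Your Neumann expansion produces, at order $k\ge 1$, a Gaussian chaos $u^\top E_k u$ of degree $\Theta(k)$ in \emph{all} $md$ variables, and hypercontractivity then gives tails $\exp(-c\,t^{2/\Theta(k)})$, which again cannot survive a union bound over $\exp(O(d))$ points of a net on $\mathbb{S}^{d-1}$. The paper sidesteps this with a decoupling trick you are missing: it writes $x^{(i)}=(1+\eps_i)^{-1/2}v^{(i)}$ and builds the perturbation from the \emph{unit vectors} $v^{(i)}$. Then the design matrix $M_{ij}=(v^{(i)}\cdot v^{(j)})^2$ is independent of $\eps$, so for fixed $u$ one has $u^\top K u=\eps\cdot(M^{-1}\beta)$ with $\beta_i=(u\cdot v^{(i)})^2$, which after conditioning on the directions is a \emph{linear} function of the independent bounded $\eps_i$'s. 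Ordinary Hoeffding then yields genuinely sub-Gaussian tails $\exp(-cd)$ with no degree penalty; an additional heavy/light split of $\beta$ handles the fact that $\|\beta\|_2$ is not itself small with exponentially high probability. Without this direction/length decoupling, your dependence-handling strategy does not close.
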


As of now, the conjecture in its above precise form remains open.
Before we describe prior progress towards proving the SCPW conjecture, we briefly summarize
some known motivation from the perspectives of machine learning and theory of computation.
For a detailed description, the reader is referred to~\cite{PTVW22} (the first version of which
motivated the current work).

\paragraph{Motivation from ML and TCS}

The motivation for the introduction of the SCPW conjecture
in the initial works~\cite{SaundersonPhD, SaundersonCPW12, SaundersonPW13}
came from machine learning. Specifically, they studied the algorithmic problem
of decomposing a data matrix into a sum of a diagonal and a random low-rank matrix. It turns out
that the validity of the SCPW conjecture (or even a quantitatively weaker version thereof)
suffices for the correctness analysis of a convex programming formulation of this problem,
which was proposed in these works.
In a similar vein,~\cite{PodosinnikovaPW19} established
a connection \new{between} a closely related conjecture
to the problem overcomplete Independent Component Analysis (ICA).
Again, proving that modified conjecture would allow~\cite{PodosinnikovaPW19}
to analyze a specific SDP for the ICA problem. Finally,
there is a close connection between the positive side
of the SCPW Conjecture (i.e., part (i) above) and
lower bounds for Sums-of-Squares relaxations
for certain statistical (average case) tasks, specifically
in the context of the works~\cite{GhoshJJPR20, HsiehK22}.
Roughly speaking, proving the positive
side of the SCPW Conjecture (or a weaker version thereof)
suffices to prove that a certain SDP arising
(in these statistical problems) is feasible. Improved bounds
on the positive part of the SCPW conjecture
(like the one established in the current work)
have direct implications for these problems.

\paragraph{Prior Work Towards Proving the SCPW Conjecture}
The negative part of the SCPW conjecture (i.e., part (ii)) is easy to establish \new{to} within a factor of $2$.
As a result, most of the research has focuses on proving the positive part for as large value of $m$ as possible,
which we now describe. The early works that formulated the SCPW conjecture~\cite{SaundersonPhD, SaundersonPW13}
showed a bound of $m = O(d^{6/5-c})$, for any constant $c>0$. In the recent work~\cite{GhoshJJPR20},
this bound was improved to $m = O(d^{3/2-c})$, for any constant $c>0$. Interestingly, the latter improvement
was obtained indirectly, in the context of proving SoS lower bounds for the Sherrington-Kirkpatrick Hamiltonian.
More recently, the first version of~\cite{PTVW22} claimed a bound of $m = d^2 / \polylog(d)$. Unfortunately,
there were issues with their proof 
and the first version of the paper was retracted. 
The best correct bound, prior to our work, 
was the one obtained in~\cite{GhoshJJPR20}.

\subsection{Our Result and Techniques} \label{ssec:results}

\paragraph{Main Result} \new{We establish a bound on the positive part of the SCPW conjecture
that is near-optimal, within polylogarithmic factors.}

\begin{theorem}[Main Result]\label{thm:main}
There exists a universal constant $c>0$ such that the following holds.
For any positive integer $d$ and any $m < c d^2/\log^4(d)$,
if $x^{(1)},x^{(2)},\ldots,x^{(m)}$ are i.i.d.\ samples from $\calN(0,I_d)$,
then with probability at least $1-o_d(1)$ there exists
an origin-centered ellipsoid that passes through all of the $x^{(i)}$'s.
\end{theorem}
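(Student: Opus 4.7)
The plan is to parameterize origin-centered ellipsoids as level sets $\{x : x^\top M x = 1\}$ of positive definite symmetric matrices $M$, and to look for $M$ of the form $M = \tfrac{1}{d} I_d + N$ with $N$ a symmetric ``correction.'' Under this ansatz the fitting constraints $(x^{(i)})^\top M x^{(i)} = 1$ become the linear equations $\langle N, x^{(i)}(x^{(i)})^\top\rangle = b_i$, where $b_i \eqdef 1 - \|x^{(i)}\|_2^2/d$ is centered and of typical size $1/\sqrt{d}$, while the positive definiteness of $M$ reduces to showing $\|N\|_{op} < 1/d$. Since $m \ll d(d+1)/2 = \dim(\mathrm{Sym}_d)$, a symmetric $N$ satisfying the constraints generically exists; the content of the proof is to pick one with sufficiently small operator norm. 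I would take the minimum-Frobenius-norm solution $N = \sum_i \alpha_i \, x^{(i)}(x^{(i)})^\top$, where $\alpha = A^{-1} b$ and $A$ is the $m \times m$ Gram matrix with entries $A_{ij} = \langle x^{(i)}, x^{(j)}\rangle^2$.

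The analysis then splits into two pieces. First, I would show that $A$ is well-conditioned: $\E[A] = (d^2+d) I_m + d\,\mathbf{1}\mathbf{1}^\top$ has all eigenvalues of size $\Theta(d^2)$, and a moment-method estimate on $A - \E[A]$ (via $\tr(A-\E[A])^{2k}$ for $2k = \Theta(\log d)$) yields $\|A - \E[A]\|_{op} = o(d^2)$ provided $m = O(d^2/\polylog(d))$, so $\|A^{-1}\|_{op} \lesssim 1/d^2$. This pins down $\alpha$: heuristically $\alpha \approx b/d^2$, so $|\alpha_i| \approx d^{-5/2}$. Second, one must bound $\|N\|_{op}$. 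If $\alpha$ were independent of the $x^{(i)}$, matrix Bernstein on $\sum_i \alpha_i x^{(i)}(x^{(i)})^\top$ with variance $\sigma^2 \approx m/d^4$ and per-summand bound $R \approx d^{-3/2}$ would give
\[
\|N\|_{op} \;\lesssim\; \sqrt{m\log(d)/d^4} \;+\; \log(d)/d^{3/2},
\]
which is $o(1/d)$ exactly when $m = o(d^2/\log d)$.

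The main obstacle is precisely that $\alpha = A^{-1} b$ is a complicated nonlinear function of all of the $x^{(i)}$, so matrix Bernstein cannot be applied off the shelf. I would handle this via a Neumann expansion $A^{-1} = d^{-2} \sum_{k \geq 0} (I - A/d^2)^k$, producing a decomposition $N = \sum_k N_k$ in which each $N_k$ is a multilinear expression in the $x^{(i)}$ of controlled degree, analyzable by a trace-moment computation on the resulting random tensor; the series converges geometrically in $k$ by the conditioning from step one. The $\log^4(d)$ loss in the final statement is, I expect, exactly the combined price of the moment orders needed in the two concentration steps together with a union bound over bad events. Once $\|N\|_{op} < 1/d$ is established with high probability, $M = I_d/d + N \succ 0$ is the desired origin-centered ellipsoid passing through every $x^{(i)}$.
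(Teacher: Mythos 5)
Your construction is not the paper's: you are proposing the least-squares (minimum Frobenius norm) solution with the unnormalized Gram matrix $A_{ij}=\langle x^{(i)},x^{(j)}\rangle^2$, which is the route of the independent work \cite{PTVW22}, whereas the paper normalizes each sample as $x^{(i)}=(1+\eps_i)^{-1/2}v^{(i)}$, solves the system exactly in the directions $v^{(i)}$, and exploits the resulting independence of the lengths $\eps$ from the direction Gram matrix $M$ to run a cover-plus-Hoeffding argument. That difference matters, because your plan for the hard step has two genuine problems. First, a concrete error: $\E[A]=(d^2+d)I_m+d\,\mathbf{1}\mathbf{1}^{\top}$ does \emph{not} have all eigenvalues $\Theta(d^2)$; in the $\mathbf{1}$ direction the eigenvalue is $d^2+d+dm=\Theta(d^3/\log^4 d)$. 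Consequently the proposed Neumann expansion $A^{-1}=d^{-2}\sum_{k\ge 0}(I-A/d^2)^k$ diverges, since $\|(I-A/d^2)\mathbf{1}\|_2/\|\mathbf{1}\|_2\approx m/d\gg 1$. You would have to peel off the rank-one spike (expand around $(d^2+d)I+d\,\mathbf{1}\mathbf{1}^{\top}$, or project orthogonally to $\mathbf{1}$) before any geometric series makes sense; note the paper never inverts its analogous spiked matrix by a power series, it only uses $M\succeq I/3$ together with a separate estimate showing $M^{-1}\mathbf{1}\approx(d/m)\mathbf{1}$.

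Second, and more fundamentally, even with a corrected expansion the entire content of the theorem is the operator-norm bound on $\sum_k N_k$, where $b$ and $A$ are functions of the same samples and each term repeats sample indices, so the terms are not multilinear in the $x^{(i)}$ and cannot be handed to matrix Bernstein; deferring this to an unspecified ``trace-moment computation on the resulting random tensor'' leaves the crux unproved. This is precisely the step where the first (retracted) analysis of the least-squares construction went wrong, and where the eventual analyses require substantial graphical-matrix bookkeeping across all orders $k$. The paper's decoupling is what lets it avoid this: conditioned on the directions, $u^{\top}Ku=\eps\cdot(M^{-1}\beta)$ with $\beta_i=(u\cdot v^{(i)})^2$ is a sum of independent bounded terms, so a heavy/light split of $\beta$ plus scalar Hoeffding gives the exponentially-high-probability bound needed for the union bound over the cover. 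So your step one (moment-method control of the centered Gram matrix) parallels the paper's Lemma~\ref{lem:A-prime}, but step two as written both starts from a divergent series and omits the argument that actually carries the proof.
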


\paragraph{Overview of Techniques}

Our basic technique will be to use a so-called identity perturbation construction.
In particular, if $x^{(1)},x^{(2)},\ldots,x^{(m)}$ are i.i.d.\ samples from
$\calN(0,(1/d) I_d)$,
then it is our goal to show that with high probability there exists
a positive definite matrix $N$ such that $(x^{(i)})^\top N x^{(i)} = 1$, for all $1\leq i \leq m$.
As a first observation, we note that (with high probability) taking $N=I_d$
should \emph{nearly} work for all $i$.
We will thus attempt to construct an appropriate positive definite matrix 
$N$ that is close to the identity.

In particular, we will take $N = I+ \sum_{i=1}^m \delta_i x^{(i)}(x^{(i)})^\top$,
for some carefully chosen scalars $\delta_1,\delta_2,\ldots,\delta_m$.
In particular, since we have $m$ different $\delta_i$'s to set
in order to satisfy the $m$ linear (in the $\delta_i$) equations $(x^{(i)})^\top N x^{(i)} = 1$,
we can generically expect for there to be a unique solution.
This leaves us with the issue of our other constraint, namely that $N$ is positive definite.
For this it will help to know that $N$ is close to the identity,
as it will be sufficient to show that
$\left\|\sum_{i=1}^m \delta_i x^{(i)}(x^{(i)})^\top\right\|_2 \leq 1$.

To achieve this, we will need to first get a reasonable handle on the scalars $\delta_i$.
To that end, we derive a formula that $\delta = M^{-1}\eps$,
where $M$ is the matrix with entries $M_{i,j}$ roughly $(x^{(i)} \cdot x^{(j)})^2$
and $\eps_i$ a function of $\|x^{(i)}\|_2^2 -1$. The first thing to note about $M$
is that its diagonal entries will be approximately $1$, and its off-diagonal entries approximately $1/d$.
Thus, $M$ can be written as $(1/d) \mathbf{1}\mathbf{1}^\top + (1-1/d)I + A$,
for some matrix $A$ whose entries can be thought of as relatively small random noise.
Our first major task will be to show that $A$ has bounded operator norm.
In particular, we show that for $m\ll d^2/\log^4(d)$ we have that $\|A\|_2 < 1/2.$
The proof of this statement relies on the method of moments, namely bounding $\E[A^t]$
for some appropriate even integer $t$. This bound is particularly important,
as it allows us to bound $\|M^{-1}\|_2$, and in particular implies that $\delta$
is not much bigger than $\eps$.

This leaves us with the task of bounding the operator norm of
$R:= \sum_{i=1}^m \delta_i x^{(i)}(x^{(i)})^\top$.
We will establish this using a cover argument.
In particular, it suffices to show (see Fact \ref{cover lemma})
 that $|u^\top R u|$ is small for all $u$ in a cover of the unit sphere of $\R^d$.
This in turn means that it suffices to show that, for any particular unit vector $u$,
$|u^\top R u|$ is small with exponentially high probability.
To show this, we note that $u^\top R u$ is linear in $\delta$,
and thus linear in $\eps$. We would thus like to be able to treat this
as a sum of independent random variables
and apply some concentration bounds. Unfortunately,
as we have set things up, $\eps$ and $M$ are not independent of each other.
To address this issue, we need a slight change in definitions.
In particular, we let $x^{(i)} = (1+\eps_i)^{-1/2} v^{(i)}$ for $v^{(i)}$ the unit vector
in the direction of $x^{(i)}$ and $\eps_i = 1/\|x^{(i)}\|_2^2 -1$.
Now if we instead write $N = I+ \sum_{i=1}^m \delta_i v^{(i)}(v^{(i)})^\top$,
we have that $\delta = M^{-1} \eps$, where $M_{i,j} = (v^{(i)}\cdot v^{(j)})^2$.
Since the $v^{(i)}$ and $\eps_i$ are all independent of each other,
$\eps_i$ is independent of $M$ and $u$ and this strategy can be made to work.

However, to implement the above approach, 
we run into the additional technical issue that $|\eps_i|$ is unbounded.
This can be fixed by showing that, with high probability, $|\eps_i|$
is reasonably small for all $i$. Then conditioning on this event,
the $\eps_i$ are independent and bounded random variables.

For the rest of the cover argument, we have that $u^\top R u = \eps \cdot (M^{-1} \beta)$,
where $\beta_i = (u\cdot v^{(i)})^2$. If we have an $\ell_2$ bound on $\beta$
(and thus $M^{-1} \beta$), we can apply a Hoeffding bound to get
a high probability upper bound on $|u^\top R u|$. Unfortunately,
$\|\beta\|_2$ is not quite small enough for our purposes with exponentially high probability.
This is because there is an inverse exponential probability
that any given entry of $\beta$ will be constant sized.
To deal with this, we need to show that with exponentially high probability
$\beta$ can be divided into a heavy part --- 
consisting of a small number large coordinates, 
where we bound the contribution of these 
based on a high-probability $\ell_\infty$ bound on $\delta$
--- and a light part with $\ell_2$-norm at most $O(m/d^2)$.

\paragraph{Independent Work}
Independent work by~\cite{PTVW22}, using different techniques,
established a qualitatively similar bound of $m = d^2 / \polylog(d)$
on the positive part of the SCPW conjecture. Roughly speaking, the proof given in
\cite{PTVW22} relies on the least squares construction, 
taking $N$ to be the matrix with $(x^{(i)})^\top N x^{(i)}=1$, for all $i$, that minimizes $\|N\|_F$.
In contrast, our argument uses the identity perturbation construction, 
whereby $N$ is taken to be a perturbation of $I$ obtained by solving a system of linear equations.

\section{Preliminaries} \label{sec:prelims}

{\bf Notation. } For a vector $u \in \R^d$, we use $\|u\|_2$ to denote its $\ell_2$-norm.
For vectors $u, v \in \R^d$, we denote their standard inner product by $u \cdot v$.
We will sometimes use $(u)_i$ for the $i^{\mathrm{th}}$ coordinate of vector $u$.
We will use $I_d$ for the $d \times d$ identity matrix. If the dimension $d$ is clear
from the context, we will omit the index and simply use $I$.
For a square matrix $A$, we will use $\|A\|_2$ for its operator (spectral) norm.

We use $o_d(1)$ to denote a quantity that goes to $0$ as $d$ goes to infinity.
Throughout this paper, we use the phrase ``with high probability'' 
to mean with probability $1-o_d(1),$ and the phrase ``with exponentially high probability'' 
to mean with probability at least $1-2^{-Cd}$, for $C$ some sufficiently large constant.

We will make use of some basic facts about the uniform distribution over the 
sphere in $d$-dimensions. In particular, we will use the following basic lemma.

\begin{lemma}\label{sphere facts lemma}
If $v$ is a uniform random point on the unit sphere 
and $w$ another unit vector then the following holds: 
$\E[v\cdot w] = 0$, $\E[(v\cdot w)^2]=1/d$, 
and for any constant $k>0$, $\E[|v\cdot w|^k] = O_k(d^{-k/2})$. Furthermore,
$\Pr(|v\cdot w| > t) = \exp(-\Omega(dt^2))$.
\end{lemma}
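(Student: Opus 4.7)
The plan is to use rotational invariance of the uniform distribution on the sphere to reduce to a one-dimensional computation, and then to represent the uniform distribution on the sphere via a normalized Gaussian vector so that the tail bound follows from standard Gaussian concentration.

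First, I would observe that the distribution of $v \cdot w$ depends only on $\|w\|=1$, so by applying a rotation we may assume $w = e_1$, and the task becomes analyzing the distribution of the first coordinate $v_1$. For the first moment, I would note that $v$ and $-v$ are equidistributed, so $\E[v_1] = 0$. For the second moment, by the symmetry of the distribution under coordinate permutations, $\E[v_i^2]$ is independent of $i$, and summing gives $d\,\E[v_1^2] = \E[\|v\|_2^2] = 1$, hence $\E[v_1^2] = 1/d$.

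For the tail bound and higher moments, I would use the representation $v = g/\|g\|_2$ with $g \sim \calN(0,I_d)$, so $v_1 = g_1/\|g\|_2$. By standard chi-squared concentration, $\Pr(\|g\|_2^2 < d/2) \le e^{-\Omega(d)}$. On the complementary ``good'' event, the inequality $|v_1| > t$ forces $|g_1| > t\sqrt{d/2}$, and standard Gaussian tail bounds give $\Pr(|g_1| > t\sqrt{d/2}) \le 2\exp(-t^2 d/4)$. Combining,
\[
\Pr(|v_1| > t) \;\le\; 2e^{-t^2 d/4} + e^{-\Omega(d)},
\]
and since $|v_1| \le 1$ the bound is trivial for $t \ge 1$, so for all $t$ we get $\Pr(|v_1| > t) = \exp(-\Omega(dt^2))$, as required.

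Finally, for the $k$-th absolute moment, I would integrate the tail bound:
\[
\E[|v_1|^k] \;=\; \int_0^\infty k t^{k-1} \Pr(|v_1| > t)\,dt \;\le\; \int_0^\infty k t^{k-1} \exp(-\Omega(dt^2))\,dt \;=\; O_k(d^{-k/2}),
\]
by the substitution $s = t\sqrt{d}$. There is no real obstacle in this proof; the only ingredients are rotational symmetry, the Gaussian representation of the uniform sphere measure, and the standard chi-squared and Gaussian tail bounds, all of which are textbook. Accordingly, I would keep the proof short and cite these standard concentration facts rather than rederive them.
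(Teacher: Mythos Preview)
Your proof is correct and follows essentially the same approach as the paper: both arguments represent $v$ via a normalized Gaussian (equivalently, write a Gaussian $x$ as $Lv$ with $L$ a scalar independent of $v$ and $L\in[1/2,2]$ with high probability) and deduce the tail and moment bounds from standard Gaussian concentration, while the second-moment identity comes from the same symmetry computation $\sum_i \E[(v\cdot w_i)^2]=1$. Your write-up is in fact more detailed than the paper's sketch; the only cosmetic difference is that you obtain the $k$-th moment by integrating the tail bound, whereas the paper leaves this implicit in the Gaussian comparison.
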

\begin{proof}
Most of these statements follow from the fact that $x \sim \calN(0,(1/d) I_d)$ can be written 
as $L v$ for $L$ a scalar-valued random variable independent of $v$ with $L\in [1/2,2]$ with high probability. 
The second moment bound follows by extending $w$ to an orthonormal basis $w_1,w_2,\ldots,w_d$ of $\R^d$ 
and noting that $1 = \E\left[ \sum_{i=1}^d (v\cdot w_i)^2 \right] = d \sum_{i=1}^d \E[(v\cdot w)^2]$
by symmetry.
\end{proof}

We also note that Theorem \ref{thm:main} is trivial for any constant value of $d$, 
and thus we assume throughout that $d$ is at least a sufficiently large constant. 
We will also assume as necessary that $m = \Omega(d^2/\log^4(d))$ 
as increasing the value of $m$ only makes Theorem \ref{thm:main} harder.

\section{Proof of Main Result}

The basic outline of our proof is as follows:
In Section~\ref{structure sec}, we lay out the overall structure of the proof along
with the basic definitions of $M,N,\eps$, and $\delta$.
In Section \ref{norm bound sec}, we prove the bound on $\|A\|_2$.
In Section \ref{bounds sec}, we use this to prove some basic bounds on $M, \eps$ and $\delta$.
Finally, in Section \ref{cover sec}, we complete the details of the cover argument.

\subsection{Proof Structure}\label{structure sec}

To prove the theorem, we need to exhibit a positive definite matrix $N$
such that, with high probability over the $x^{(i)}$'s, \new{where $x^{(i)} \sim \calN(0,(1/d) I_d)$,}
we have that
$(x^{(i)})^{\top} N x^{(i)} = 1$ for all $1\leq i \leq m$.
As each of the terms $(x^{(i)})^{\top} N x^{(i)}$ will be close to $1$
with high probability, our goal will be to let $N$ be a perturbation of the identity matrix.
In particular, our plan will be to let
$N = I + \sum_{i=1}^m \delta_i x^{(i)} (x^{(i)})^{\top}$
for carefully chosen scalars $\delta_i$.
In particular, the constraints that $(x^{(i)})^{\top} N x^{(i)} = 1$ for each $i$
will give us a system of linear equations that hopefully uniquely define the $\delta_i$'s.

For parts of this argument that will become relevant later,
it will be useful for us to separate out the
direction of each vector $x^{(i)}$ from its length.
To this end, we express each $x^{(i)}$ as $x^{(i)} = (1+\eps_i)^{-1/2} v^{(i)}$,
where $v^{(i)}$ is a unit vector and $\eps_i \in (-1,\infty)$ is a real number
equal to $\eps_i := 1/\|x^{(i)}\|_2^2 - 1$.
We note that this specific parameterization of the length
(rather than selecting, e.g., $x^{(i)}$ equals $(1+\eps_i)v^{(i)}$ or $\ell_i v^{(i)}$) is
such that the resulting formula for the matrix $N$ below will be convenient for our purposes.
In particular, if we let $N:= I_d + \sum_{i=1}^m \delta_i v^{(i)} (v^{(i)})^{\top}$, we will have that
\begin{align}
(x^{(j)})^{\top} N x^{(j)} & = (x^{(j)})^{\top} I x^{(j)} + \sum_{i=1}^m \delta_i (x^{(j)} \cdot v^{(i)})^2 \nonumber \\
&= (1+\eps_j)^{-1} \left(1 + \sum_{i=1}^m (v^{(i)}\cdot v^{(j)})^2 \delta_i \right) \;. \label{eqn:ena}
\end{align}
We will henceforth use the notation
$\delta := (\delta_1, \ldots, \delta_m)$ and $\eps := (\eps_1, \ldots, \eps_m)$.
By \eqref{eqn:ena}, we need a choice of $\delta$ such that
$\sum_{i=1}^m (v^{(j)}\cdot v^{(i)})^2 \delta_i = \eps_j$ for all $j$.
Letting $M$ be the $m\times m$ matrix \new{given by }
\begin{equation}
M_{i,j} := (v^{(i)}\cdot v^{(j)})^2 \;, \label{eqn:M}
\end{equation}
\new{the relationship we need to hold between $\delta$ and $\eps$} is equivalent to having
that $\delta = M^{-1}\eps$.
Hence, so long as the matrix $M$ is invertible,
taking this choice of $\delta$ will give us a matrix $N$
such that $(x^{(i)})^{\top} N x^{(i)} = 1$ for all $i$.

It remains to show that the matrix $N$ is positive definite.
We will establish this by showing that the matrix
\begin{equation}
K:= N-I = \sum_{i=1}^m \delta_i v_i v_i^{\top} \label{eqn:K}
\end{equation}
satisfies $\|K\|_2 \leq 1$.
Towards this end, we will use an appropriate cover argument.
In particular, we will require the following standard fact.
\begin{fact}[see, e.g.,~\cite{Versh18}]\label{cover lemma}
There exists a set $\mathcal{C} \subseteq \R^d$ of $2^{O(d)}$
unit vectors such that for any symmetric matrix $R$ we have
$$
\| R \|_2 \leq 2 \max_{u\in\mathcal{C}} |u^{\top} R u| \;.
$$
\end{fact}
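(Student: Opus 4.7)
The plan is to construct $\mathcal{C}$ as a $(1/4)$-net of the unit sphere $S^{d-1}$ in $\R^d$ and to exploit the quadratic-form characterization $\|R\|_2 = \sup_{\|u\|_2 = 1} |u^\top R u|$, which holds for any symmetric matrix $R$. A standard volumetric covering argument (comparing volumes of Euclidean balls) shows that such a net can be chosen with $|\mathcal{C}| \leq (1 + 2/(1/4))^d = 9^d = 2^{O(d)}$, which is the cardinality bound we want.

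Next, I would unfold the quadratic form at an arbitrary unit vector $u$ in terms of its nearest cover point $c \in \mathcal{C}$. Writing $u = c + (u-c)$ with $\|u-c\|_2 \leq 1/4$, bilinearity gives
\[
u^\top R u = c^\top R c + (u-c)^\top R c + u^\top R (u-c).
\]
By the variational definition of the operator norm and the bound $\|u-c\|_2 \leq 1/4$, each cross term satisfies $|(u-c)^\top R c| \leq \|R\|_2/4$ and $|u^\top R(u-c)| \leq \|R\|_2/4$, so
\[
|u^\top R u| \;\leq\; |c^\top R c| + \tfrac{1}{2}\|R\|_2.
\]

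Taking the supremum over unit $u$ on the left-hand side and the maximum over $c \in \mathcal{C}$ on the right-hand side gives $\|R\|_2 \leq \max_{c \in \mathcal{C}} |c^\top R c| + \tfrac{1}{2}\|R\|_2$, which rearranges to the claimed bound $\|R\|_2 \leq 2 \max_{u \in \mathcal{C}} |u^\top R u|$.

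There is no real obstacle here: both the existence of an $\epsilon$-net of size $(1+2/\epsilon)^d$ and the reduction from operator norm to net quadratic forms are entirely standard. The only minor thing to be careful about is the choice of $\epsilon$ — one needs $\epsilon < 1/2$ so that the constant $1/(1-2\epsilon)$ is finite, and $\epsilon = 1/4$ yields exactly the constant $2$ stated in the fact while keeping the net size at $2^{O(d)}$.
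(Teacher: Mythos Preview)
Your proof is correct and is precisely the standard $\epsilon$-net argument that the paper is invoking by citing Vershynin; the paper states this as a Fact without proof, so there is no alternative argument to compare against. Your choice $\epsilon=1/4$ and the two-cross-term bound yielding the factor $2$ are exactly what is needed.
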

\noindent Thus, it suffices for us to show that with high probability
over the choice of $x^{(i)}$'s, that the following holds:
\begin{enumerate}
\item $M$ is invertible.
\item For each $u\in\mathcal{C}$, we have that
$\left| \sum_{i=1}^m \delta_i (u\cdot v_i)^2 \right| \leq 1/2$.
\end{enumerate}
Our general strategy for proving the second result above
will be a union bound over $u$'s. However, our analysis
will also need to depend on several events
that are independent of the choice of $u$.
Thus, we will find some collection of events
that imply the above conditions so that each event
satisfies the following:
\begin{itemize}
\item Either the event is independent of $u$ and occurs with probability $1-o_d(1)$.
\item Or the event depends on $u$ and occurs with probability
at least $1-2^{-Cd}$ for some sufficiently large constant $C>0$ assuming that all events of the first type hold.
\end{itemize}
By a union bound, all events of the first type hold with probability $1-o_d(1)$
and all events of the second type hold for all $u\in \mathcal{C}$,
again with probability $1-o_d(1)$. Thus, together these will imply
that the $x_i$'s lie on an origin-centered ellipsoid
with probability at least $1-o_d(1)$.

Perhaps the most significant of these events has to do
with the behavior of the matrix $M$. First, it is clear
that $\E[(v^{(i)}\cdot v^{(j)})^2]$ is equal to $1$ if $i=j$
and equal to $1/d$ if $i\neq j$.
Therefore, we have that
$\E[M] = (1-1/d) I + (1/d) \mathbf{1}\mathbf{1}^{\top}$,
where $\mathbf{1} = (1,1,\ldots,1)$ is the vector
with all entries equal to $1$.
Let $A$ be the difference, in particular, let
$$
A = M - (1-1/d)I - (1/d) \mathbf{1}\mathbf{1}^{\top} \;.
$$
Our most important claim is that with high probability $A$
has small operator norm.
Specifically, we prove the following proposition.

\begin{proposition}\label{A norm prop}
With probability $1-o_d(1)$ we have that $\|A\|_2 < 1/2$.
\end{proposition}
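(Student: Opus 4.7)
The plan is to control $\|A\|_2$ by the method of moments. Since $A$ is symmetric, $\|A\|_2^{2t} \le \tr(A^{2t})$ for any positive integer $t$, so Markov's inequality gives $\Pr(\|A\|_2 \ge 1/2) \le 4^t \cdot \E[\tr(A^{2t})]$. It therefore suffices to produce a bound of the form $\E[\tr(A^{2t})] \le m \cdot (C m / d^2)^t$ for some absolute constant $C$, and to take $t$ of order $\log d$; since $A_{ii} = 0$ and $A_{ij} = (v^{(i)}\cdot v^{(j)})^2 - 1/d$ has mean zero with variance $O(1/d^2)$, this is the bound one would expect by analogy with a Wigner matrix of entry-variance $O(1/d^2)$ and operator norm $O(\sqrt{m}/d)$.

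Expanding gives $\E[\tr(A^{2t})] = \sum \E[\prod_{k=0}^{2t-1} A_{i_k, i_{k+1}}]$, where the sum is over closed walks $(i_0, i_1, \ldots, i_{2t-1}, i_0)$ on $[m]$. I would group walks by the isomorphism class of the labeled multigraph they traverse. The main complication, compared to the classical Wigner proof, is that the entries are not independent: all $A_{ij}$ in row $i$ share $v^{(i)}$, so one cannot factor the expectation over edges. To get around this, I would integrate out the $v^{(j)}$ vertex by vertex (using independence across distinct $j$), using Lemma~\ref{sphere facts lemma} to bound the resulting moments: a vertex $j$ appearing $r$ times in the walk sits inside $2r$ scalar factors of the form $v^{(j)} \cdot v^{(\cdot)}$, and $\E[|v^{(j)} \cdot w|^{2r}] = O_r(d^{-r})$ uniformly in the unit vector $w$. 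In particular, the two factors incident to a vertex of multiplicity one contribute at most $O(1/d^2)$, matching the entry-variance.

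With this per-vertex moment bound, the dominant contribution comes from walks whose underlying multigraph is a tree on $t+1$ vertices with each edge traversed twice, and the walk is a depth-first traversal of that tree; the count of such walks is the Catalan number $C_t$ times $m(m-1)\cdots(m-t) \le m^{t+1}$. Each gives a contribution of $O(1/d^2)^t$, for a total of $O(m) \cdot (Cm/d^2)^t$. Walks whose underlying graph has an extra edge or a cycle contribute strictly less, since adding an extra edge costs a factor of $O(1/d)$ in moments but adds at most a factor of $m$ in counting, while cycles lose a factor of $m$ relative to the tree count without a compensating moment gain. Summing and taking $t = \Theta(\log d)$ gives $\E[\tr(A^{2t})] \le m \cdot (Cm/d^2)^t$; for $m \le c d^2/\log^4 d$ with $c$ small, the ratio $Cm/d^2$ is at most $Cc/\log^4 d$, so $4^t \E[\tr(A^{2t})] = o_d(1)$, as desired.

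The hard part I expect is making the vertex-by-vertex integration rigorous while correctly accounting for the $-1/d$ subtraction in each $A_{ij}$ and the cancellations it induces, especially for vertices of high multiplicity where the naive bound $\E[|v^{(j)}\cdot w|^{2r}] = O_r(d^{-r})$ is not quite tight enough unless one also exploits the mean-zero structure of $A$ to kill low-order terms in the binomial expansion of $\prod_k ((v^{(i_k)}\cdot v^{(i_{k+1})})^2 - 1/d)$. A careful grouping of the binomial terms (equivalently, treating $A_{ij}$ itself as a single Hermite-like object in $v^{(i)}$ conditional on $v^{(j)}$) should absorb these cancellations and make the tree contributions genuinely dominant, yielding the claimed bound.
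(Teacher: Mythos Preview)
Your overall strategy---moment method on $\tr(A^{2t})$ with $t\asymp\log d$, aiming for a bound $m\cdot(\text{something}\cdot m/d^2)^t$---is the same as the paper's. But the execution you sketch has a real gap, and the final bound you claim is \emph{stronger} than what the paper proves, which should be a warning sign.

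The paper does not attempt your vertex-by-vertex integration. Instead it (i) splits $A=A'+A^\ast$ according to whether the Hermite piece is an off-diagonal monomial $y_ky_\ell$ or a centered square $(y_k^2-1/d)/\sqrt 2$, handling $A^\ast$ by an elementary Frobenius-norm computation; and (ii) for $A'$, replaces the spherical vectors $v^{(i)}$ by Gaussians $x^{(i)}$ via the domination $\E[p(v^{(1)},\dots)]\le\E[p(x^{(1)},\dots)]$ for monomials $p$, and then uses Wick/Isserlis to write the trace moment as a sum over \emph{pairings} of $4t$ symbols. The dependency problem you flag---that all entries in row $i$ share $v^{(i)}$---is precisely what this Gaussian-comparison trick dissolves: after the replacement, every coordinate of every $x^{(i)}$ is independent, and the combinatorics become a clean pairing count. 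The key technical step is then an inductive claim bounding $m^\alpha d^\beta\le m^{t/2+1}d^t$ for every valid pairing ($\alpha,\beta$ being the numbers of equivalence classes of vertex- and coordinate-labels), which yields $\E[\tr((A')^t)]\le m\,(m(4t)^4/d^2)^{t/2}$.

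Your proposed integration of one $v^{(j)}$ at a time does not straightforwardly produce a product of independent factors. For a leaf with a doubled edge you indeed get $\E_{v^{(j)}}[A_{aj}^2]=O(1/d^2)$ unconditionally, but for a leaf between distinct neighbours one computes $\E_{v^{(j)}}[A_{aj}A_{jb}]=\frac{2}{d(d+2)}A_{ab}$, i.e.\ the contracted edge re-enters the remaining expectation rather than disappearing as a constant. So the recursion does not terminate in a scalar product for non-tree walks, and your one-line dismissal of cycles (``lose a factor of $m$ without a compensating moment gain'') is not justified. This is exactly the ``hard part'' you identify, and you have not supplied a mechanism to resolve it; the paper's mechanism is the Gaussian comparison plus the pairing induction.

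Finally, note that your target bound $\E[\tr(A^{2t})]\le m(Cm/d^2)^t$ with an \emph{absolute} constant $C$ would give $\|A\|_2<1/2$ for $m\le c\,d^2$ with no polylogarithmic loss. The paper only obtains $C=\Theta(t^4)$, coming from the crude count $(4t-1)!!$ of all pairings, and this is the sole source of the $\log^4 d$ in the final theorem. So either your outline is implicitly claiming a sharper result than the paper, or the constant $C$ must be allowed to grow with $t$; in the latter case you need to track that growth, and in the former you need a much tighter walk-counting argument than the one sketched.
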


Note that this condition depends only on the $v^{(i)}$'s
(and in particular is independent of the $\eps_i$'s).
We prove it in the next section.

\subsection{Operator Norm Bounds: Proof of Proposition~\ref{A norm prop}}\label{norm bound sec}

The main goal of this section will be to prove Proposition \ref{A norm prop}.
We begin by finding a new useful way to express the matrix $A$.
\new{From the definition of $A$, it is easy to see} that $A_{i,j}$ is equal to $0$ if $i=j$ and
equal to $(v^{(i)}\cdot v^{(j)})^2 -1/d$ if $i\neq j$.
\new{We begin our analysis by introducing some new terminology that allows us to put this in a simpler form.}
In particular, let $S$ be the set of degree-$2$ polynomials on $\R^d$
given by $p(y) = y_i y_j$ for some $1\leq i < j \leq d$
or $p(y) = (y_i^2-1/d)/\sqrt{2}$ for some $1\leq i\leq d.$
We note that these are the Hermite polynomials of degree-$2$ on $\R^d$.
We have the following:

\begin{claim} \label{clm:A-rewrite}
For $i\neq j$, it holds that $A_{i,j} = 2 \sum_{p\in S} p(v^{(i)})p(v^{(j)})$.
\end{claim}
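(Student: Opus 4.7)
The plan is to verify this by direct expansion, matching the off-diagonal and diagonal contributions of $(v^{(i)}\cdot v^{(j)})^2$ to the two types of polynomials in $S$.

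First, I would expand
\[
(v^{(i)}\cdot v^{(j)})^2 = \sum_{k,\ell=1}^d v^{(i)}_k v^{(i)}_\ell v^{(j)}_k v^{(j)}_\ell \;,
\]
and split the sum into the $k = \ell$ diagonal and the $k \neq \ell$ off-diagonal parts. The off-diagonal contribution is $2\sum_{k<\ell} (v^{(i)}_k v^{(i)}_\ell)(v^{(j)}_k v^{(j)}_\ell)$, which is exactly $2\sum_{p} p(v^{(i)}) p(v^{(j)})$ summed over the monomial polynomials $p(y) = y_k y_\ell$ with $k<\ell$ in $S$.

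It remains to match the diagonal contribution $\sum_k (v^{(i)}_k)^2 (v^{(j)}_k)^2$, minus the $1/d$ correction, to the Hermite polynomials $p(y) = (y_k^2 - 1/d)/\sqrt{2}$. Expanding
\[
2\sum_{k=1}^d \frac{(v^{(i)}_k)^2 - 1/d}{\sqrt{2}} \cdot \frac{(v^{(j)}_k)^2 - 1/d}{\sqrt{2}} = \sum_k (v^{(i)}_k)^2 (v^{(j)}_k)^2 - \frac{1}{d}\sum_k (v^{(i)}_k)^2 - \frac{1}{d}\sum_k (v^{(j)}_k)^2 + \frac{1}{d} \;.
\]
Using $\|v^{(i)}\|_2^2 = \|v^{(j)}\|_2^2 = 1$, the last three terms collapse to $-1/d$. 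Combining this with the off-diagonal contribution gives $(v^{(i)}\cdot v^{(j)})^2 - 1/d = A_{i,j}$, as desired.

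There is no genuine obstacle here; the claim is really the statement that the degree-$2$ Hermite polynomials on $\R^d$ (with the appropriate normalization) provide the orthonormal decomposition making $(v\cdot w)^2 - 1/d$ into an inner product of feature vectors. The $\sqrt{2}$ normalization on the $y_k^2$ polynomials and the factor of $2$ out front are precisely what one needs to account for the symmetry of monomials $y_k y_\ell$ versus squared monomials $y_k^2$, and the fact that $\|v^{(i)}\|_2 = 1$ eliminates the would-be cross terms.
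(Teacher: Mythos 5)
Your proof is correct and follows essentially the same route as the paper: expand $(v^{(i)}\cdot v^{(j)})^2$ coordinate-wise, match the $k<\ell$ terms with the monomials $y_ky_\ell$, and use $\|v^{(i)}\|_2=\|v^{(j)}\|_2=1$ to absorb the $-1/d$ into the Hermite-type polynomials $(y_k^2-1/d)/\sqrt{2}$. No gaps.
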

\begin{proof}
For $i\neq j$, we have the following chain of equalities:
\begin{align*}
A_{i,j} & = (v^{(i)}\cdot v^{(j)})^2-1/d\\
& = \left(\sum_{k=1}^d v^{(i)}_k v^{(j)}_k\right)^2 -1/d\\
& = \sum_{k,\ell=1}^d v^{(i)}_k v^{(i)}_\ell v^{(j)}_k v^{(j)}_\ell -1/d\\
& = 2 \sum_{1\leq k < \ell \leq d} v^{(i)}_kv^{(i)}_\ell v^{(j)}_k v^{(j)}_\ell + \sum_{1\leq k \leq d} (v^{(i)}_k)^2 (v^{(j)}_k)^2 -1/d\\
& = 2 \sum_{p(y)=y_ky_\ell, 1\leq k < \ell \leq d}p(v^{(i)})p(v^{(j)}) + \sum_{1\leq k \leq d} (v^{(i)}_k)^2 (v^{(j)}_k)^2
- (1/d)\sum_{1\leq k \leq d} (v^{(i)}_k)^2 - (1/d) \sum_{1\leq k \leq d} (v^{(j)}_k)^2 + 1/d\\
& = 2 \sum_{p(y)=y_ky_\ell, 1\leq k < \ell \leq d}p(v^{(i)})p(v^{(j)})
+ \sum_{1\leq k \leq d} \left( (v^{(i)}_k)^2-1/d \right) \left( (v^{(j)}_k)^2-1/d \right)\\
& = 2 \sum_{p\in S} p(v^{(i)})p(v^{(j)}) \;.
\end{align*}
This completes the proof of the claim.
\end{proof}
To proceed with our analysis,
we will split $S$ into two sets.
Namely, $S'$ is the set of polynomials of the form
$p(y) = y_iy_j$ for $1\leq i < j\leq d$,
and $S^\ast$ the set of polynomials of the form $(y_i^2-1/d)/\sqrt{2}$.
We can then write $A=A' + A^\ast$, where for $i\neq j$
we have that
$$
A_{i,j}' := 2\sum_{p\in S'} p(v^{(i)})p(v^{(j)}) \;,
$$
and
$$
A_{i,j}^\ast := 2\sum_{p\in S^\ast} p(v^{(i)})p(v^{(j)}) \;.
$$
\new{In order to bound the operator norm of $A$,}
we will bound the operator norms
of the matrices $A'$ and $A^{\ast}$ separately.

The case of $A^\ast$ turns out to be relatively simple.
We show:

\begin{lemma} \label{lem:A-star}
With high probability, we have that $\|A^{\ast}\|_2 \leq 1/4$.
\end{lemma}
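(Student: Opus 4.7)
The plan is to exploit the fact that, up to a small diagonal correction, $A^{\ast}$ equals a low-rank matrix of the form $W W^{\top}$. Let $W \in \R^{m\times d}$ be the matrix with rows $w^{(i)} \in \R^d$ defined by $w^{(i)}_k := (v^{(i)}_k)^2 - 1/d$. Then $(WW^{\top})_{i,j} = \sum_k ((v^{(i)}_k)^2 - 1/d)((v^{(j)}_k)^2 - 1/d)$, which agrees with $2 \sum_{p\in S^\ast} p(v^{(i)}) p(v^{(j)})$ for every pair $(i,j)$, so $A^{\ast} = WW^{\top} - D$, where $D := \mathrm{diag}(\|w^{(1)}\|_2^2, \ldots, \|w^{(m)}\|_2^2)$ corrects for $A^{\ast}$ being zero on the diagonal. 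By the triangle inequality,
\[
\|A^{\ast}\|_2 \;\leq\; \|W\|_2^2 \;+\; \max_{1\leq i\leq m}\|w^{(i)}\|_2^2,
\]
so it is enough to bound each term by, say, $1/8$ with high probability.

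For the max-diagonal term, $\|w^{(i)}\|_2^2 = \sum_k (v^{(i)}_k)^4 - 1/d$, which has expectation $3/(d+2) - 1/d = O(1/d)$. Since $\sum_k v_k^4$ is a degree-four polynomial on the sphere, standard concentration (e.g., spherical Lipschitz concentration, or Hanson-Wright applied after the radial decomposition $v = x/\|x\|_2$ with $x \sim \calN(0,I_d)$) gives $\|w^{(i)}\|_2^2 = O(1/d)$ with probability $1 - \exp(-\omega(\log d))$ per sample, which easily survives a union bound over the $m = O(d^2)$ indices. Therefore $\max_i \|w^{(i)}\|_2^2 = O(1/d) = o(1)$ with high probability.

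For $\|W\|_2^2 = \|W^{\top}W\|_2$, I would apply matrix Bernstein to the representation $W^{\top}W = \sum_{i=1}^m w^{(i)}(w^{(i)})^{\top}$ as a sum of independent rank-one positive semidefinite matrices in $\R^{d \times d}$. Using the sphere identities $\E[v_k^4] = 3/(d(d+2))$ and $\E[v_k^2 v_\ell^2] = 1/(d(d+2))$ for $k\neq \ell$, the diagonal entries of $\E[w^{(1)}(w^{(1)})^{\top}]$ are $O(1/d^2)$ and the off-diagonal entries are $O(1/d^3)$, giving $\|\E[W^{\top}W]\|_2 \leq m\cdot O(1/d^2) = O(1/\log^4(d))$. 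After truncating each $w^{(i)}$ on the event $\|w^{(i)}\|_2^2 \leq C/d$ from the previous paragraph so that the summands remain independent with deterministic norm bound $R = O(1/d)$, the matrix variance parameter obeys $\sigma^2 = \|\sum_i \E[\|w^{(i)}\|_2^2\,w^{(i)}(w^{(i)})^{\top}]\|_2 \leq (C/d)\cdot O(m/d^2) = O(m/d^3)$. Matrix Bernstein with $t = 1/8$ then yields $\|W^{\top}W - \E[W^{\top}W]\|_2 \leq 1/8$ with probability at least $1 - 2d\exp(-\Omega(d))$, and hence $\|W\|_2^2 \leq 1/8 + o(1)$. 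Combining the three bounds gives $\|A^{\ast}\|_2 \leq 1/8 + o(1) + o(1) < 1/4$ with high probability.

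The only delicate point I anticipate is ensuring that the per-sample tail in the second paragraph decays fast enough to absorb the union bound over $m = \Omega(d^2/\log^4(d))$ samples; any standard polynomial-concentration bound on the sphere or Hanson-Wright-type inequality supplies the required $\exp(-\omega(\log d))$ rate, so I expect no serious obstacle.
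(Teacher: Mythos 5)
Your proposal is correct, and it shares the paper's structural skeleton: you peel off the diagonal correction (your $A^\ast = WW^\top - D$ is exactly the paper's observation that $A^\ast - 2BB^\top$ is diagonal, since your $W$ is $\sqrt{2}B$), and the main work is bounding the operator norm of the $d\times d$ Gram matrix $W^\top W = 2B^\top B$. Where you diverge is the tool for that main estimate. The paper stays elementary: it splits off the diagonal of $2B^\top B$ (controlled entrywise via the event $|v^{(i)}_k|\leq \log(d)/\sqrt{d}$) and bounds the off-diagonal part by its Frobenius norm, computing $\E[\|C\|_F^2]=O((m/d^2)^2 + m/d^2)=o_d(1)$ and applying Markov --- no matrix concentration inequality, at the price of only a $1-o_d(1)$ guarantee (which is all the lemma needs, since this event is one of the ``first type'' events independent of $u$). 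You instead invoke matrix Bernstein for $\sum_i w^{(i)}(w^{(i)})^\top$ after truncation, together with the exact sphere moments $\E[v_k^4]=3/(d(d+2))$, $\E[v_k^2v_\ell^2]=1/(d(d+2))$, which buys an exponentially small failure probability and arguably a cleaner variance bookkeeping, at the cost of citing a heavier tool and handling the truncation/mean-shift details. Two minor quantitative points to tighten: your claim $\max_i\|w^{(i)}\|_2^2=O(1/d)$ with tail $\exp(-\omega(\log d))$ does not follow from global Lipschitz concentration on the sphere (the Lipschitz constant of $v\mapsto\sum_k v_k^4$ is $\Theta(1)$, giving only $O(\sqrt{\log d/d})$ deviations); the easy route is $\sum_k v_k^4\leq\max_k v_k^2\leq \log^2(d)/d$ on the max-coordinate event, which yields $O(\log^2(d)/d)$ rather than $O(1/d)$ --- this weaker bound still suffices everywhere in your argument, since the extra logarithms are absorbed by $m\leq c\,d^2/\log^4(d)$. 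Also, $\|\E[W^\top W]\|_2 = O(m/d^2)$ needs the one-line Gershgorin (or row-sum) remark that the $d$ off-diagonal entries of size $O(m/d^3)$ contribute $O(m/d^2)$ per row; as stated you only bound the entries. Neither point is a real gap.
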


\begin{proof}
Let $B$ be the $m\times d$ matrix defined as $B_{i,k} = p_k(v^{(i)})$,
where $p_k\in S^\ast$ is given by $p_k(y) = (y_k^2-1/d)/\sqrt{2}$.
We note that $A^\ast-2BB^{\top}$ has $0$ as its off-diagonal entries
and has diagonal entries equal to $-\sum_{k=1}^d ((v^{(i)}_k)^2-1/d)^2$.

Recalling that each $x^{(i)} \sim N(0, I_d/d)$,
by its definition each $v^{(i)}$ is a uniform random point in the unit sphere.
Therefore,  with high probability
we have that $|v^{(i)}_k| \leq (\log(d)/\sqrt{d})$
for all $1\leq i\leq m$ and $1\leq k \leq d$.
If this event holds, then the entries of $A^\ast-2BB^{\top}$
are all $O(\log^4(d)/d)$ in magnitude,
and thus for $d$ sufficiently large,
with high probability we have that
$\|A^\ast - 2 B B^{\top}\|_2 \leq 1/12$.
In order to bound $\|A^\ast\|_2$ from above,
we thus need \new{to prove an upper bound on}
$\|2 B B^{\top}\|_2 = 2 \|B^{\top} B\|_2$.

To achieve this, we note that the $(k,\ell)$-entry of $2B^{\top}B$ is
$$
\sum_{i=1}^m ((v^{(i)}_k)^2-1/d)((v^{(i)}_\ell)^2-1/d) \;.
$$
We first deal separately with the diagonal entries.
If the high probability condition that each $|v^{(i)}_k| < \log(d)/\sqrt{d}$ holds,
then each diagonal entry is at most $m \log^2(d)/d^2 < 1/12$.
Thus, letting $C$ be the matrix obtained by zeroing
out the diagonal entries of $2B^{\top}B$,
we have that with high probability
$\|2B^{\top}B-C\|_2 < 1/12.$ On the other hand,
we have that
$\|C\|_2 \leq \|C\|_F$.
The expected squared Frobenius norm of $C$
can be written as:
\begin{align*}
\E[\|C\|_F^2] & = \sum_{k\neq \ell} \E\left[ \left(\sum_{i=1}^m ((v^{(i)}_k)^2-1/d)((v^{(i)}_\ell)^2-1/d)\right)^2\right] \;.
\end{align*}
Note that for any fixed $k$ and $\ell$
the $((v^{(i)}_k)^2-1/d)((v^{(i)}_\ell)^2-1/d)$ are i.i.d.\ random variables,
and thus the RHS above can be simplified as follows:
$$
\E\left[ \left(\sum_{i=1}^m ((v^{(i)}_k)^2-1/d)((v^{(i)}_\ell)^2-1/d)\right)^2\right]
= m^2 \E[(w_k^2-1/d)(w_\ell^2-1/d)]^2 + m\var[(w_k^2-1/d)(w_\ell^2-1/d)] \;,
$$
where $w$ is a uniform random point on the unit sphere.
To analyze this quantity, we can write:
\begin{align*}
0 & = \E\left[\left( \sum_{k=1}^d (w_k^2 - 1/d) \right)^2\right]\\
&= d(d-1) \E[(w_k^2-1/d)(w_\ell^2-1/d)] + d\E[(w_k^2-1/d)^2]\\
&= d(d-1)\E[(w_k^2-1/d)(w_\ell^2-1/d)] + O(1/d) \;.
\end{align*}
Thus, we have that
$\E[(w_k^2-1/d)(w_\ell^2-1/d)] = O(1/d^3).$
It \new{follows from Lemma \ref{sphere facts lemma} that}\\ \mbox{$\var[(w_k^2-1/d)(w_\ell^2-1/d)] = O(1/d^4)$.}
Combining with the above, we have that
$$
\E[\|C\|_F^2] = O(d^2(m^2/d^6 + m/d^4)) = O((m/d^2)^2 + (m/d^2)) = o_d(1) \;,
$$
where the last equation follows by our choice of $m$.
Therefore, by the Markov inequality,
with probability $1-o_d(1)$ we have that
$\|C\|_2 \leq \|C\|_F \leq 1/12.$

In summary, with high probability,
we have that
$$
\|A^\ast\|_2 \leq \|A^\ast -2BB^{\top}\|_2 + \|2B^{\top}B-C\|_2 +\|C\|_2 \leq 1/12+1/12+1/12 = 1/4 \;.
$$
This completes the proof of Lemma~\ref{lem:A-star}.
\end{proof}

\noindent We next bound above $\|A'\|_2$.

\begin{lemma} \label{lem:A-prime}
With probability at least $1-1/d$, we have that $\|A'\|_2 \leq 1/4$.
\end{lemma}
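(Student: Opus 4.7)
The plan is to bound $\|A'\|_2$ by the trace-moment method, exploiting the orthogonality of the polynomials in $S'$ under the uniform measure on $S^{d-1}$. Because the sphere distribution is invariant under coordinate sign-flips, for $p_1 = y_ay_b,\ p_2=y_cy_d \in S'$ one has $\E[p_1(v)p_2(v)] = \E[v_av_bv_cv_d] = 0$ unless $\{a,b\} = \{c,d\}$, and each $p \in S'$ has $L^2$-norm $\Theta(1/d)$. Since $\|A'\|_2^{2t} \leq \tr((A')^{2t})$, the strategy is to prove a bound of the form
\[
\E[\tr((A')^{2t})] \;\leq\; m \cdot (Cm/d^2)^t
\]
for an absolute constant $C$ and an even $t = \Theta(\log d)$, after which Markov's inequality quickly gives $\Pr[\|A'\|_2 > 1/4] \leq 1/d$.

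First I would expand
\[
\tr((A')^{2t}) \;=\; 2^{2t} \!\! \sum_{\substack{(i_1,\ldots,i_{2t}) \in [m]^{2t} \\ i_s \neq i_{s+1}}} \sum_{p_1,\ldots,p_{2t} \in S'} \prod_{s=1}^{2t} p_s(v^{(i_s)})\, p_s(v^{(i_{s+1})}),
\]
with indices read cyclically, and take expectations. By independence of the $v^{(i)}$'s, the summand factors over the distinct vertex labels appearing in the closed walk $i_1 \to i_2 \to \cdots \to i_{2t} \to i_1$: at each vertex $j$ one is left with $\E[\prod_\alpha p_\alpha(v^{(j)})]$, a product over the polynomial labels incident to $j$. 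Sign-flip invariance kills any factor in which some coordinate appears with odd total multiplicity across those labels, forcing the $2t$ polynomial labels along the walk to be globally organized into matched blocks.

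Second I would carry out a Wigner-style count of the surviving walks. Lemma~\ref{sphere facts lemma} gives $\E[(v_k v_\ell)^{2r}] = O_r(d^{-2r})$, so each matched polynomial pair contributes $O(1/d^2)$ to the expectation. A matched closed walk of length $2t$ has at most $t+1$ distinct vertex labels (the standard Wigner pair-up count), producing at most $m^{t+1}$ vertex choices; matched polynomial assignments into $|S'| = \binom{d}{2}$ choices give at most $(d^2/2)^t$ options; and a Catalan-like combinatorial factor $C_0^t$ absorbs the remaining overcounting. Putting these pieces together yields the desired $\E[\tr((A')^{2t})] \leq m (Cm/d^2)^t$. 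Substituting $m \leq c d^2/\log^4(d)$ for $c$ sufficiently small and taking $t = \lceil 2 \log d \rceil$, Markov's inequality then gives
\[
\Pr[\|A'\|_2 > 1/4] \;\leq\; 16^t \, \E[\tr((A')^{2t})] \;\leq\; m \cdot (16 Cc/\log^4 d)^t \;\leq\; 1/d
\]
for $d$ large enough.

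The main obstacle is executing the combinatorics of step two rigorously. Unlike a classical Wigner ensemble the entries of $A'$ are highly dependent (each $v^{(i)}$ appears in every row and column of $A'$), and the distribution is spherical rather than Gaussian, so one must leverage the \emph{exact} orthogonality of the monomials in $S'$ together with the higher-moment estimates of Lemma~\ref{sphere facts lemma} to obtain the claimed $O(d^{-2})$ per matched polynomial pair; simultaneously, the walk-counting must be tight enough that the polylogarithmic slack in $m \leq cd^2/\log^4(d)$ absorbs the combinatorial overhead coming from the $m^{t+1}$ vertex choices and the Catalan factor.
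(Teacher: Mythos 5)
Your overall plan coincides with the paper's: expand $\E[\tr((A')^{t})]$ for $t=\Theta(\log d)$, show the surviving terms are controlled, and finish with Markov. But the core of the argument is exactly the part you leave unproven, and two of your structural assertions are not correct as stated. First, sign-flip invariance does \emph{not} force the polynomial labels along the walk to be ``organized into matched blocks'': at a vertex the parity condition only requires that every coordinate appear with even total multiplicity among the incident degree-$2$ monomials, and configurations such as $y_ay_b,\ y_by_c,\ y_cy_a$ meeting at one vertex survive (their product is $y_a^2y_b^2y_c^2$, with $\E[v_a^2v_b^2v_c^2]=\Theta(d^{-3})$) even though no two of the three monomials are equal. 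So your count of ``$t$ matched polynomial values, each chosen from $\binom{d}{2}$ options, contributing $O(1/d^2)$ apiece'' does not enumerate the surviving terms, and the per-block contributions are not what you state. Second, the claim that a surviving closed walk has at most $t+1$ distinct vertex labels ``by the standard Wigner pair-up count'' does not follow here: the entries of $A'$ are strongly dependent (every entry in row $i$ involves $v^{(i)}$), so survival is not characterized by ``each edge traversed twice,'' and what must actually be controlled is a joint trade-off between the number $\alpha$ of vertex classes and the number $\beta$ of coordinate classes, of the form $m^{\alpha}d^{\beta}\leq m^{t/2+1}d^{t}$ (for a length-$t$ walk). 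That trade-off is the technically hard content of the lemma, and your ``Catalan-like factor $C_0^t$ absorbs the overcounting'' simply asserts it.

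For comparison, the paper gets around both issues by (i) a comparison step showing $\E[p(v^{(1)},\ldots,v^{(m)})]\leq\E[p(x^{(1)},\ldots,x^{(m)})]$ for monomials, with $x^{(i)}\sim\calN(0,(1/d)I_d)$, so that Wick's formula turns each expectation into a count of pairings of the $4t$ individual coordinate symbols (this automatically accounts for the ``odd overlap'' configurations above), and (ii) the inductive Claim~\ref{clm:ind}, whose delicate case analysis (isolated $i_t$, whether $i_1\sim i_{t-1}$, surgery producing a valid pairing of size $t-1$ or $t-2$) is precisely what establishes $m^{\alpha}d^{\beta}\leq m^{t/2+1}d^{t}$ for every valid pairing. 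The number of pairings is then bounded crudely by $(4t-1)!!\leq(4t)^{2t}$, which is $t^{O(t)}$ rather than $C^t$; this is exactly where the hypothesis $m\leq cd^2/\log^4(d)$ is consumed, so your stated target $\E[\tr((A')^{2t})]\leq m(Cm/d^2)^t$ with an absolute constant $C$ is stronger than what the paper proves and would itself need a genuinely new argument. To repair your proposal you would need to supply (or replace) both missing ingredients: a correct characterization of the surviving coordinate configurations and a proof of the $m^{\alpha}d^{\beta}$ bound over all of them.
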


As Lemma \ref{lem:A-prime} is substantially the most technically
difficult part of our proof, and the only part where the requirement that $m\ll d^2/\log^4(d)$ is required,
we start by providing a few words on our overall approach.
Our high-level strategy is to use the method of moments.
In particular, we will bound $\E[\tr((A')^t)]$ for some even integer $t$
on the order of $\log(d)$, as this will allow us to prove high probability bounds
on $\|A'\|_2$. We do this by expanding the $\tr((A')^t)$ and noting that
the vast majority of the remaining terms have mean $0$
and the ones that remain have easily-computable expectations.
We then use combinatorial techniques to bound the number of remaining terms and get our final result.

\begin{proof}
Note that $A_{i,j}$ is equal to $0$ if $i\neq j$,
and otherwise is equal to $\sum_{1\leq a\neq b\leq d} v^{(i)}_av^{(i)}_bv^{(j)}_av^{(j)}_b$.
Therefore, taking $t$ to be a positive even integer, we have that
$$
\tr((A')^t) = \sum_{\substack{i_1,i_2,\ldots,i_t \in\{1,2,\ldots,m\}\\ i_s \neq i_{s+1}}} \sum_{\substack{a_1,b_1,a_2,b_2,\ldots,a_t,b_t\\ a_s\neq b_s}} \prod_{s=1}^t v^{(i_{s})}_{a_s}v^{(i_{s+1})}_{a_s}v^{(i_s)}_{b_s}v^{(i_{s+1})}_{b_s} \;,
$$
where we use the convention that the indices are taken modulo $t$,
namely that $i_{t+1}=i_1, a_{t+1}=a_1$, and $b_{t+1}=b_1$.
Now we would like to bound the expectation of this quantity
over the choice of $v^{(1)},\ldots,v^{(m)}$ independent random unit vectors.

First, we claim that for any monomial $p(v^{(1)},v^{(2)},\ldots,v^{(m)})$ we have
that 
$$\E[p(v^{(1)},v^{(2)},\ldots,v^{(m)})] \leq \E[p(x^{(1)},x^{(2)},\ldots,x^{(m)})] \;,$$ 
\new{where the $x^{(i)}$ are independent $\calN(0, (1/d) I_d)$ random vectors}.
This holds because the distributions on both the $v$'s and the $x$'s
are symmetric in each coordinate, and thus both expectations are $0$
unless $p$ is even degree in each coordinate of each $v^{(i)}$ or $x^{(i)}$. We next recall that $x^{(i)} = L_i v^{(i)}$,
for independent random variables $L_i$ with $\E[L_i^2]=1$.
Since $p$ is a monomial, we have that
$p(x^{(1)},\ldots,x^{(m)}) = p(v^{(1)},\ldots,v^{(m)})L_1^{d_1}L_2^{d_2}\cdots L_m^{d_m}$,
where $d_i$ is the degree of $x^{(i)}$ in $p$ is a non-negative even number.
Since $\E[L_i^2]=1$, it follows that $\E[L_i^{d_i}] \geq 1$.
Thus, we get that $\E[p(v^{(1)},v^{(2)},\ldots,v^{(m)})] \leq \E[p(x^{(1)},x^{(2)},\ldots,x^{(m)})]$,
as desired.

Hence, we have that
\begin{equation} \label{eqn:trace-t}
\E[\tr((A')^t)] \leq \sum_{\substack{i_1,i_2,\ldots,i_t \in\{1,2,\ldots,m\}\\ i_s \neq i_{s+1}}} \sum_{\substack{a_1,b_1,a_2,b_2,\ldots,a_t,b_t\\ a_s\neq b_s}}
\E\left[\prod_{s=1}^t x^{(i_{s})}_{a_s}x^{(i_{s+1})}_{a_s}x^{(i_{s})}_{b_s}x^{(i_{s+1})}_{b_s}\right] \;.
\end{equation}
Let us consider the expectation inside the sum.
This is a monomial of a Gaussian.
Note that if $y$ is the concatenation of the $x^{(i)}$'s,
it is distributed as $\calN(0, (1/d) I_{md})$, and we have that
$$
\E\left[ \prod_{i=1}^{md} y_i^{d_i} \right]
$$
is $0$ unless all of the $d_i$'s are even,
and otherwise is equal to $d^{-\sum_{i=1}^{md} d_i/2} \prod_{i=1}^{md} (d_i-1)!!$.
Note that if $D=\sum_{i=1}^{md} d_i$ and we rewrite the above monomial as
$$
\E\left[\prod_{j=1}^D y_{c_j} \right] \;,
$$
where $i$ occurs as a value of $c_j$ exactly $d_i$ times,
then the expectation is equal to
$d^{-D/2}$ times the number of pairings of $\{1,2,\ldots,D\}$
such that $j$ is paired with $j'$ only if $c_j=c_{j'}$.
This is because the number of ways to pair the $j$'s
so that $c_j=i$ is $(d_i-1)!!$, if $d_i$ is even, and $0$ otherwise.

Therefore, the inner expectation in \eqref{eqn:trace-t},
$$
\E\left[\prod_{s=1}^t x^{(i_{s})}_{a_s}x^{(i_{s+1})}_{a_s}x^{(i_{s})}_{b_s}x^{(i_{s+1})}_{b_s}\right],
$$
equals $d^{-2t}$ times the number of pairings on the $4t$ formal symbols
$x^{(i_{s})}_{a_s}, x^{(i_{s+1})}_{a_s}, x^{(i_{s})}_{b_s}, x^{(i_{s+1})}_{b_s}$,
for $1\leq s\leq t$, such that if $x^{(i_{s})}_{\alpha_{s'}}$ is paired
with $x^{(i_{r})}_{\beta_{r'}}$ then $i_s = i_r$, and $\alpha_{s'} = \beta_{r'}$.

Note that any such pairing on these formal symbols
will partition the $i_s$-terms 
into some $\alpha$ many equivalence classes \new{for some $\alpha$},
and the $a_s, b_s$ terms into some $\beta$ \new{many} equivalence classes.
We note that if $i_s$ is equivalent to $i_{s+1}$ or $a_s$ is equivalent to $b_s$ for some $s$,
then this pairing will never show up
for any setting of the $i$'s, $a$'s, and $b$'s.
Otherwise, there will be at most $m^\alpha d^\beta$ settings of these variables consistent with this pairing. Call a pairing for which no $i_s$ is equivalent to $i_{s+1}$ or $a_s$ equivalent to $b_s$ \emph{valid}.

We have that
\begin{align*}
\E[\tr((A')^t)] & \leq \sum_{\substack{i_1,i_2,\ldots,i_t \in\{1,2,\ldots,m\}\\ i_s \neq i_{s+1}}} \sum_{\substack{a_1,b_1,a_2,b_2,\ldots,a_t,b_t\\ a_s\neq b_s}} d^{-2t}\\
& \ \ \ \ \ \ \ \ \ \ \ \ \ \ \ \ \ \ \ \ \ \ \  \cdot[\textrm{Number of valid pairings consistent with the choice of }i \textrm{'s}, a \textrm{'s}, b \textrm{'s}]\\
& \leq d^{-2t} \sum_{\textrm{valid pairings}} m^\alpha d^\beta.
\end{align*}
To bound this sum, we need the following claim:

\begin{claim} \label{clm:ind}
For $t$ any positive integer and for any valid pairing, we have that
$m^\alpha d^\beta \leq m^{t/2+1}d^t$.
\end{claim}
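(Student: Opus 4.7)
The plan is to combine the ambient regime $m \leq d^2$ (which holds since $m = O(d^2/\log^4(d))$) with two purely combinatorial bounds, $\beta \leq t$ and $2\alpha + \beta \leq 2t + 2$. Rewriting the target as $m^{\alpha - t/2 - 1} \leq d^{t-\beta}$, either $\alpha \leq t/2 + 1$ (where $\beta \leq t$ suffices directly) or $\alpha > t/2 + 1$ (where substituting $\log m \leq 2 \log d$ reduces it to $2(\alpha - t/2 - 1) \leq t - \beta$, i.e., $2\alpha + \beta \leq 2t + 2$).

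The bound $\beta \leq t$ follows from the observation that every valid pair must be \emph{inter-slot}: within slot $s$, the four factors $x^{(i_s)}_{a_s}, x^{(i_{s+1})}_{a_s}, x^{(i_s)}_{b_s}, x^{(i_{s+1})}_{b_s}$ admit no pair that avoids merging either $i_s$ with $i_{s+1}$ or $a_s$ with $b_s$. Since each letter index lives in only one slot, any two paired atoms with matching letter value must therefore involve two distinct letter indices, so every letter class contains at least two indices and $\beta \leq 2t/2 = t$.

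For the joint bound $2\alpha + \beta \leq 2t + 2$, I would encode the pairing as a polygon-gluing: form the $2$-complex $X'$ by taking $t$ labeled squares (one per slot) with corners $(i_s, a_s, i_{s+1}, b_s)$ and gluing their $4t$ edges in $2t$ pairs according to the pairing, giving $V(X') = \alpha + \beta$, $E(X') = 2t$, $F(X') = t$. Blowing up each non-manifold vertex $v$ into $\mu_v$ copies (one per cycle of its link) produces a disjoint union of closed $2$-manifolds whose components correspond to the components of the slot-adjacency multigraph $G_\Sigma$ (vertices = slots, edges = pairs); write $c$ for their number. Euler's formula gives $\sum_v \mu_v \leq t + 2c$, and the fact that $X'$ itself is connected forces at least $c - 1$ of the vertex identifications to reglue the blown-up components, so $\sum_v \mu_v \geq (\alpha + \beta) + (c - 1)$; these combine to $\alpha + \beta \leq t + c + 1$. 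Separately, within each component $C$ of $G_\Sigma$ the induced $i$-class multigraph has minimum degree $\geq 2$ (else some $i$-class would force an invalid intra-slot pair), so the number of $i$-classes meeting $C$ is at most the number of slots in $C$; summing gives a ``blow-up'' total $\sum_C \alpha_C \leq t$. Moreover, each cyclic adjacency between consecutive slots in different $G_\Sigma$-components must be bridged by an $i$-class spanning both components (the shared class $[i_{s+1}]$), forcing a total $i$-side excess of at least $c - 1$. Combining these gives $\alpha \leq t - c + 1$, i.e., $\alpha + c \leq t + 1$. Adding $\alpha + \beta \leq t + c + 1$ and $\alpha + c \leq t + 1$ yields $2\alpha + \beta \leq 2t + 2$ as required.

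The main technical obstacle is the Euler-characteristic bookkeeping on the (potentially non-manifold) $2$-complex $X'$: one must link the link-cycle counts $\mu_v$ to the slot-component structure carefully, and in particular use the cyclic ``$i$-class bridging'' observation to show that the required $c - 1$ vertex identifications come specifically from the $i$-side rather than from an ambiguous combination of $i$- and letter-side excesses, since only this yields the tight bound $\alpha + c \leq t + 1$ needed to upgrade $\alpha + \beta \leq t + c + 1$ to the target $2\alpha + \beta \leq 2t + 2$.
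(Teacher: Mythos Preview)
Your argument is correct and, once unpacked, establishes exactly the combinatorial inequality $2\alpha+\beta\le 2t+2$ (together with $\beta\le t$), which combined with $m\le d^2$ is equivalent to the claim. The route, however, is genuinely different from the paper's.

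The paper proceeds by induction on $t$: if every $i$-class has size at least two then $\alpha\le t/2$ and one is done immediately; otherwise an isolated class $[i_t]$ forces $\{a_{t-1},b_{t-1}\}=\{a_t,b_t\}$ and one contracts the pairing to size $t-1$ or $t-2$ (splitting on whether $[i_{t-1}]=[i_1]$), tracking how $\alpha,\beta$ drop. Your proof instead realizes the pairing as an edge-gluing of $t$ squares, reads off $\alpha+\beta\le t+c+1$ from the Euler characteristic of the blown-up closed surface, and then bounds $\alpha+c\le t+1$ by two observations: (i) in each $G_\Sigma$-component $C$ the multigraph on $i$-classes with slot-edges has minimum degree $\ge 2$ (a degree-one class would leave two atoms forced to pair, giving $a_s\sim b_s$), so $\sum_C\alpha_C\le t$; and (ii) the bipartite incidence graph between $i$-classes and $G_\Sigma$-components is connected (consecutive slots $s,s+1$ always share $[i_{s+1}]$), so $\sum_C\alpha_C\ge \alpha+c-1$. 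Adding the two displayed inequalities yields $2\alpha+\beta\le 2t+2$.

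The paper's induction is entirely elementary and avoids any topology, at the cost of a somewhat delicate case split and an ad hoc re-pairing in the $[i_{t-1}]=[i_1]$ subcase. Your surface argument is more structural and makes transparent \emph{why} the bound has the shape $m^{t/2+1}d^t$: the ``$+1$'' is the genus-zero surplus, and $t/2$ is the min-degree-two constraint. Two places in your sketch would benefit from one extra line each: the ``excess $\ge c-1$'' step is cleanest stated as connectivity of the bipartite class/component graph rather than via counting cyclic crossings, and the identification of components of $\tilde X$ with components of $G_\Sigma$ should be justified by noting that face-adjacency in the blown-up surface is exactly the pair-adjacency of slots.
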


\begin{proof}
We prove this by induction on $t$.
As a base case we consider $t\leq 2$.
For $t=1$, there is no valid pairing since $i_1=i_2$.
For $t=2$, we have that $\alpha\leq 2$.
Since $x^{(i_{s})}_{a_s}$ cannot be paired with
$x^{(i_{s+1})}_{a_s}$ and $x^{(i_{s})}_{b_s}$
cannot be paired with $x^{(i_{s+1})}_{b_s}$
(as the pairing would be invalid),
each $a_s,b_s$ must be in an equivalence class of size at least $2$,
and thus $\beta\leq 2t/2=t$. Thus, we have that
$m^\alpha d^\beta \leq m^2 d^2 = m^{t/2+1} d^t$.

For our inductive step, suppose that we have $t>2$
and that our bound on $m^\alpha d^\beta$ holds for any valid pairing
with a smaller value of $t$. Given our valid pairing of size $t$,
we consider two cases based on whether or not any $i_s$ is
in an equivalence class all by itself.

If no $i_s$ is in its own equivalence class,
then there are at most $t/2$ equivalence classes
among the $i_s$'s and so (as $\beta$ is still at most $t$),
we have that $m^\alpha d^\beta \leq m^{t/2}d^t$.

Otherwise, suppose without loss of generality
that $i_t$ is in an equivalence class by itself.
If so, the four terms $x^{(i_{t})}_{a_t},  x^{(i_{t})}_{b_t}, x^{(i_{t})}_{a_{t-1}}, x^{(i_{t})}_{b_{t-1}}$
must be paired with each other. Since $a_t$ and $b_t$ cannot be equivalent,
$x^{(i_{t})}_{a_t}$ must pair with $x^{(i_{t})}_{a_{t-1}}$ or $ x^{(i_{t})}_{b_{t-1}}$.
Without loss of generality, we assume that $ x^{(i_{t})}_{a_t}$ is paired with
$x^{(i_{t})}_{a_{t-1}}$, and $x^{(i_{t})}_{b_t}$ is paired with  $x^{(i_{t})}_{b_{t-1}}$.

Here we further subdivide into subcases based on
whether $i_{t-1}$ is equivalent to $i_1$ under our pairing.
If they are not equivalent, we construct a new pairing of size $t-1$
by maintaining all of the pairs that do not contain an $i_t, a_t,$ or $b_t$ term
and pairing $x^{(i_{1})}_{a_{t-1}}$ and $x^{(i_{1})}_{b_{t-1}}$
with whatever $x^{(i_{1})}_{a_t}$ and $x^{(i_{1})}_{b_t}$ were previously paired with.
We note that this preserves the equivalence relations on the $i$'s and $a/b$'s,
producing a new valid pairing with $\alpha-1$ equivalence classes on the $i$'s
and $\beta$ equivalence classes on the $a/b$'s.
By the inductive hypothesis, we have that
$m^{\alpha-1}d^\beta \leq m^{(t-1)/2+1}d^{t-1}$.
Thus, we obtain that
$m^\alpha d^\beta \leq m^{t/2+1}d^t (m/d^2)^{1/2} <  m^{t/2+1}d^t$.
This completes the inductive step in this case.

If our pairing makes $i_1$ equivalent to $i_{t-1}$,
the above construction does not work,
as the pairing produced is not valid \new{(since two adjacent $i$'s would be equivalent)}. In this case,
we instead construct a smaller valid pairing of size $t-2$.
This is done by removing any pairs involving $i_t, a_t,b_t, i_{t-1}, a_{t-1}, b_{t-1}$
and
\begin{enumerate}
\item Pairing $x^{(i_{1})}_{a_{t-2}}$ and  $x^{(i_{1})}_{b_{t-2}}$
with whatever was previously paired with $x^{(i_{t-1})}_{a_{t-2}}$ and $x^{(i_{t-1})}_{b_{t-2}}$, respectively
\item Pairing whatever was previously paired with $x^{(i_{t-1})}_{a_{t-1}}$ with
whatever was previously paired with $x^{(i_{1})}_{a_t}$
\item Pairing whatever was previously paired with $x^{(i_{t-1})}_{b_{t-1}}$
with whatever was previously paired with $x^{(i_{1})}_{b_t}$.
\end{enumerate}
It is easy to check that this produces a pairing on all $2(t-2)$ terms
that does not introduce any equivalences that were not in the original pairing,
while losing the equivalence class of $i_t$, and possibly the equivalence classes of $a_t$ and $b_t$.
Thus, this new valid pairing has $\alpha-1$ equivalence classes
on the $i$'s and at most $\beta-2$ equivalence classes on the $a/b$'s.
Thus, by our inductive hypotheses, it follows that
$m^{\alpha-1} d^{\beta-2} \leq m^{(t-2)/2+1}d^{t-2}$,
and thus that $m^\alpha d^\beta \leq m^{t/2+1}d^t$. This completes our inductive step,
establishing Claim~\ref{clm:ind}.
\end{proof}

With this bound on $m^\alpha d^\beta$, we have that
$$
\E[\tr((A')^t)] \leq d^{-2t} \sum_{\textrm{valid pairings}} m^{t/2+1}d^t \;.
$$
The number of valid pairings is at most the number of pairings on a set of size $4t$, or $(4t-1)!! \leq (4t)^{2t}$.
Therefore, we have that
$$
\E[\tr((A')^t)] \leq m (m (4t)^4 / d^2)^{t/2} \;.
$$
\new{If $t$ is the nearest even integer to $\log(d)$,
we note that $m$ is still at most a small constant times $(d^2/(4t)^4)$. Thus, we have that}
$$
\E[\|A'\|_2^t] \leq \E[\tr((A')^t)] \leq (1/4)^t/d \;.
$$
Therefore, by the Markov bound, with probability at least $1-1/d$
we have that $\|A'\|_2 \leq 1/4.$

This complete the proof of Lemma~\ref{lem:A-prime}.
\end{proof}

Combining  the above bounds on $\|A^\ast\|_2$ and $\|A'\|_2$,
we have that with probability $1-o_d(1)$ that $\|A\|_2 \leq 1/2$,
proving Proposition \ref{A norm prop}.

\subsection{Some High Probability Bounds on $M, \eps,$ and $\delta$}\label{bounds sec}

In this section, we prove some facts about $M,\eps$ and $\delta$
and introduce some high-probability conditions on them. \new{In particular,
we will need to prove operator norm bounds on $M^{-1}$ and $\ell^\infty$ bounds on $\delta$ and $\eps$.}

To begin with, by Proposition \ref{A norm prop}, we have that
with high probability $\|A\|_2 <1/2.$ Call this event $E_1$.
Since $M \succeq (1-1/d)I - A$, this implies $M\succeq I/3.$
Note that $E_1$ implies that $M$ is invertible \new{and in particular that $\|M^{-1}\|_2 \leq 3$}.

We would next like to get a better understanding of $\eps_i$. We note that
$$
\eps_i = 1/\|x^{(i)}\|_2^2 - 1 = \frac{1}{1+(\|x^{(i)}\|_2^2-1)} - 1 = -(\|x^{(i)}\|_2^2-1) + (\|x^{(i)}\|_2^2-1)^2 + O(\|x^{(i)}\|_2^2-1)^3 \;.
$$
We note that $(\|x^{(i)}\|_2^2-1)$ is $1/d$ times
a sum of $d$ independent copies of $G^2-1$, where $G \sim N(0, 1)$.
Thus, we have that with high probability $|\|x^{(i)}\|_2^2-1| < \log(d)/\sqrt{d}$ for all $1\leq i\leq m$. Call this event $E_2$.
Furthermore, conditioned on $E_2$, $(\|x^{(i)}\|_2^2-1)$ has \new{superpolynomially} small mean
and has variance $O(1/d)$. Thus, conditioned on $E_2$, we have that
$|\eps_i| = O(\log(d)/\sqrt{d})$ for all $i$, $\E[\eps_i] = O(1/d)$, and $\E[\eps_i^2]=O(1/d)$.

Finally, we note that $\delta_i$ is the inner product of the $i^{\mathrm{th}}$ row of $M^{-1}$ with $\eps$.
Let $L$ be this $i^{\mathrm{th}}$ row. Note that $L$ and $\eps$ are independent.
Conditioned on $E_1$, we have that $\|L\|_2 \leq \|M^{-1}\|_2 \leq 3.$
Note that for $L$ fixed, conditioned on $E_2$, we have that
$$
\delta_i = \sum_{j=1}^m L_i \eps_i \;.
$$
By the Hoeffding Inequality, we have that
\begin{align*}
\Pr\left(\left|\delta_i - \sum_{i=1}^m L_i \E[\eps_i] \right| > t\right)  & < 2 \exp\left(-\frac{2t^2}{4\sum_{j=1}^m L_j^2 \|\eps_j \|_\infty^2 } \right)\\
& \leq 2 \exp\left(\frac{-\Omega(t^2)}{\|L\|_2^2 \log^2(d)/d} \right) \;.
\end{align*}
Taking $t = \log^2(d)/\sqrt{d}$, \new{we have for $d$ sufficiently large that}
with high probability
$\left|\delta_i - \sum_{i=1}^m L_i \E[\eps_i] \right| < \log^2(d)/\sqrt{d}$ for all $1\leq i\leq m$.

Furthermore, we can write
$$
\sum_{i=1}^m L_i \E[\eps_i] = O(1/d) \mathbf{1} \cdot L = O(1/d) \mathbf{1}^{\top}M^{-1}e_i = O(1/d) e_i^{\top} M^{-1}\mathbf{1} \;.
$$
Now, \new{conditioned on} $E_1$, we have that
$M=\mathbf{1}\mathbf{1}^{\top}/d + O(I_{d})$.
Thus, the vector $M ((d/m)\mathbf{1})$ is $\mathbf{1}$ plus a vector of $\ell_2$-norm at most $O(d/\sqrt{m})$.
Consequently, $M^{-1}\mathbf{1}$ is $(d/m)\mathbf{1}$ plus $M^{-1}$ times a vector of $\ell_2$-norm $O(d/\sqrt{m})$.
Since $\|M^{-1}\|_2 = O(1)$,
\new{we have that} $\|M^{-1}\mathbf{1}-(d/m)\mathbf{1}\|_2 = O(d/\sqrt{m})$.
Hence, we get that
$$
\sum_{i=1}^m L_i \E[\eps_i] = O(1/\sqrt{m}) \;.
$$
Therefore, with high probability,
we have that $|\delta_i| < \log^2(d)/\sqrt{d}$ for all $1\leq i \leq m$.
We call this event $E_3$.

Let $E$ be the event that events $E_1,E_2,E_3$ all hold.
We note that this happens with probability $1-o_d(1)$.
We next need to show that for any $u\in\mathcal{C}$ that, conditioned on $E$,
it holds that $\left| \sum_{i=1}^m \delta_i (u\cdot v_i)^2 \right| \leq 1/2$
with probability at least $1-2^{-Cd}$, for some sufficiently large universal constant $C$.

\subsection{Cover Argument: Completing the Proof}\label{cover sec}

Here we will show that for any unit vector $u$ that conditioned on $E$ with probability at least $1-2^{-Cd}$ over the choice of $v^{(i)},\eps_i$ that $\left| \sum_{i=1}^m \delta_i (u\cdot v^{(i)})^2 \right| \leq 1/2$. Applying a union bound over all $u\in \mathcal{C}$ will complete our proof of Theorem \ref{thm:main}.

Let $\beta$ be the vector with entries $\beta_i = (u\cdot v^{(i)})^2$ for $1\leq i\leq m$. We would like to prove a high probability bound on the $\ell_2$ norm of $\beta$. Unfortunately, there is a decently large (i.e. only exponentially small in $d$) chance that individual entries of $\beta$ will have size $\Omega(1)$, so we will have to deal with these entries separately. In particular, note that $|\beta_i| < d^{-1/4}$ except with probability $\exp(-\Omega(d^{3/4}))$ \new{by Lemma \ref{sphere facts lemma}}. As these events are independent for each $i$, except for with exponentially small probability we have that $|\beta_i| > d^{-1/4}$ for at most $O(d^{1/4})$ many different values of $i$. Call these the \emph{heavy coordinates}.

Next, we attempt to show a high probability bound for the $\ell_2$ norm of $\beta$ over the light coordinates. In particular, we do this by bounding
$$
\E[\exp(\alpha\|\beta_{\textrm{light}}\|_2^2)]
$$
where $\alpha = d^{9/8}$.
This is at most
$$
\prod_{i=1}^m \E[\exp(\min(\alpha\beta_i^2,\alpha/d^{1/2}))].
$$
To evaluate the expectation, we note that
$$
\E[\exp(\min(d\beta_i^2,d^{1/2}))] = 1+\int_1^{e^{\alpha/\sqrt{d}}} \Pr(\exp(\alpha\beta_i^2) > t) dt.
$$
The probability in question comes down to the probability that $\beta_i > \sqrt{\log(t)/\alpha}$. However, \new{Lemma \ref{sphere facts lemma}} implies that $\Pr(\beta_i > x) = \exp(-\Omega(dx))$. Thus, we have that
$$
\E[\exp(\min(\alpha\beta_i^2,\alpha/d^{1/2}))] = 1+\int_1^{e^{\alpha/\sqrt{d}}} \exp(-\Omega(d\sqrt{\log(t)/\alpha})) dt.
$$
We evaluate this integral separately on the ranges $[1,2]$ and $[2,e^{\sqrt{d}}].$ For $t\in [1,2]$ we have that $\log(t)=\Theta(t-1)$ and thus, letting $x=t-1$, the integral is
$$
\int_0^1 \exp(-\Omega(d\sqrt{x/\alpha}))dx = \alpha/d^2 \int_0^{d^2/\alpha} \exp(-\Omega(\sqrt{y}))dy = O(\alpha/d^2).
$$
To evaluate the integral between $2$ and $e^{\alpha/\sqrt d}$, we let $s=\log(t)$ and get
$$
\int_{\log(2)}^{\alpha/\sqrt d} \exp(-\Omega(d\sqrt{s/\alpha})+s)ds.
$$
The integrand here is at most $1/d^2$, and so the integral is $O(\alpha/d^2)$. Thus, we have that
$$
\E[\exp(\alpha\|\beta_{\textrm{light}}\|_2^2)] = (1+O(\alpha/d^2))^m = \exp(O(\alpha m/d^2)).
$$
Thus, by the Markov bound, except with probability $\exp(-\Omega(m/d^{7/8}))$, we have that $\alpha\|\beta_{\textrm{light}}\|_2^2 = O(\alpha m/d^2)$ or $\|\beta_{\textrm{light}}\|_2^2 = O(m/d^2)$.

So with exponentially high probability we can write $\beta = \beta_{\textrm{heavy}}+\beta_{\textrm{light}}$, 
where $\beta_{\textrm{heavy}}$ has support size $O(d^{1/4})$ 
and entries at most $1$ and $\beta_{\textrm{light}}$ 
has squared $\ell_2$-norm at most $O(m/d^2)$. 
We wish to bound $|\delta\cdot \beta|$. 
We begin by noting that
$$
|\delta\cdot \beta_{\textrm{heavy}}| \leq \|\delta\|_\infty \|\beta_{\textrm{heavy}}\|_1 \;.
$$
Event $E$ implies that $\|\delta\|_\infty \leq \log^2(d)/\sqrt{d}$, 
and as shown above, with high probability $\|\beta_{\textrm{heavy}}\|_1 = O(d^{1/4})$. 
Thus, conditioned on $E$, with exponentially high probability, 
we have that $|\delta\cdot \beta_{\textrm{heavy}}| = O(\log^2(d)/d^{1/4})$.

Next we bound $|\delta\cdot \beta_{\textrm{light}}|$. 
This can be rewritten as follows:
$$
\delta\cdot \beta_{\textrm{light}} = (M^{-1}\eps)\cdot \beta_{\textrm{light}} 
= \eps \cdot (M^{-1} \beta_{\textrm{light}}) \;.
$$
Condition $E_1$ implies that $M^{-1}$ has constant operator norm 
and we know that with high probability it holds that 
$\|\beta_{\textrm{light}}\|_2^2 = O(m/d^2)$. Thus, conditioned on $E_1$, 
with exponentially high probability we have that 
$\gamma:= M^{-1} \beta_{\textrm{light}}$ satisfies $\|\gamma\|_2^2 = O(m/d^2)$.

Note though that $\gamma$ depends only on $u$ and the $v_i$, 
and is thus independent of $\eps$ (even conditioned on $E_1$). 
We thus have that \new{conditioned on $E_1$ and $E_2$} 
that $\gamma \cdot \eps = \sum_{i=1}^m \gamma_i \eps_i$ 
is a sum of independent random variables. 
By an application of Hoeffding's Inequality, we obtain:
\begin{align*}
\Pr\left(\left|\gamma\cdot \eps - \sum_{i=1}^m \gamma_i \E[\eps_i] \right| > 1/6 \right)  & < 2 \exp\left(-\frac{1/18}{\sum_{j=1}^m \gamma_j^2 (2\|\eps_j\|_\infty)^2} \right)\\
& \leq 2 \exp\left(\frac{-\Omega(1)}{m\log^2(d)/d^3} \right) \;.
\end{align*}
Thus, with \new{exponentially high probability}, we have that
$$
\left|\gamma_{\textrm{light}}\cdot \eps - \sum_{i=1}^m \gamma_i \E[\eps_i] \right| < 1/6 \;.
$$
This leaves us to bound
\begin{align*}
\left| \sum_{i=1}^m \gamma_i \E[\eps_i]\right| & = O(1/d)\gamma\cdot \mathbf{1}\\
& \leq O(1/d) \|\gamma\|_2 \|\mathbf{1}\|_2\\
& = O(1/d)O(\sqrt{m}/d)\sqrt{m}\\
& = O(m/d^2) < 1/6 \;.
\end{align*}
Combining with the above, we find that conditioned on $E$
with exponentially high probability over the choice of $v^{(i)}$ 
and $\eps_i$ that $|\delta\cdot \beta|<1/2$.

This completes the proof of Theorem~\ref{thm:main}. \qed

\paragraph{Acknowledgements}
We thank the authors of~\cite{PTVW22} for pointing out a normalization
issue in our proof, and for encouraging us to publish our findings.

\bibliographystyle{alpha}
\bibliography{allrefs}

\newpage



\end{document}